\documentclass[11pt]{article}
\usepackage[numbers,sort&compress]{natbib}
\usepackage{appendix}
\usepackage{enumerate}
\usepackage{amscd}
\usepackage{amsmath}
\usepackage{latexsym}
\usepackage{amsfonts}
\usepackage{amssymb}
\usepackage{amsthm}
\usepackage{verbatim}
\usepackage{mathrsfs}
\usepackage{enumerate}
\usepackage{hyperref}

 \oddsidemargin .5cm \evensidemargin .5cm \marginparwidth 40pt
 \marginparsep 10pt \topmargin 0.30cm
 \headsep1pt
 \headheight 0pt
 \textheight 9.1in
 \textwidth 6in
 \sloppy

 \setlength{\parskip}{8pt}

\theoremstyle{plain}
\theoremstyle{definition}\newtheorem{theorem}{Theorem}[section]
\theoremstyle{plain}
\theoremstyle{plain}\newtheorem{coro}[theorem]{Corollary}
\theoremstyle{plain}
\theoremstyle{remark}\newtheorem{remark}{Remark}[section]
\usepackage{xcolor}

\newcommand{\Div}{\mathrm{div}\,}
\newcommand{\B}{\Big}

\newcommand{\be}{\begin{equation}}
\newcommand{\ee}{\end{equation}}
 \newcommand{\ba}{\begin{aligned}}
 \newcommand{\ea}{\end{aligned}}

  \newcommand{\f}{\frac}
    
  \newcommand{\ben}{\begin{enumerate}}
   \newcommand{\een}{\end{enumerate}}

\newcommand{\Rmnum}[1]{\expandafter\@slowromancap\romannumeral #1@}

\allowdisplaybreaks

\numberwithin{equation}{section}
\begin{document}
\title{Yaglom's law and conserved quantity dissipation in turbulence}
\author{Yanqing Wang\footnote{   College of Mathematics and   Information Science, Zhengzhou University of Light Industry, Zhengzhou, Henan  450002,  P. R. China Email: wangyanqing20056@gmail.com},  ~  
      \, Wei Wei\footnote{School of Mathematics and Center for Nonlinear Studies, Northwest University, Xi'an, Shaanxi 710127,  P. R. China  Email: ww5998198@126.com }      ~  and   ~\, Yulin Ye\footnote{ School of Mathematics and Statistics,
      	Henan University,
      	Kaifeng, 475004,
      	P. R. China. Email: ylye@vip.henu.edu.cn} }
\date{}
\maketitle
\begin{abstract}
 In this paper, we are concerned with the local exact relationship for  third-order structure functions in  the temperature equation, the inviscid MHD equations  and the Euler equations in the sense of  Duchon-Robert type and Eyink type. It is shown that the local version of Yaglom's $4/3$ law is valid for the dissipation rates of  conserved quantities such as the energy, cross-helicity and helicity in these systems.
    In the spirit of Duchon-Robert's classical  work,  we derive the  dissipation term    resulted from the lack of smoothness of the solutions in corresponding conservation relation.
    It seems that these results suggest that the Yaglom's   law of the hydrodynamic  equations holds if an analogue of dissipation term as Duchon-Robert's is obtained. Base on this, the first  Yaglom's relation for  the Oldroyd-B model and,  inspired by the very recent work due to Boutros-Titi,     six
new 4/3 laws for subgrid scale $\alpha$-models of turbulence are also presented.
  \end{abstract}
\noindent {\bf MSC(2020):}\quad 76F02, 76B99, 35L65, 35L67, 35Q35 \\\noindent
{\bf Keywords:} Yaglom's law;  $4/3$ law;  energy;   cross-helicity;  helicity  
\section{Introduction}
\label{intro}
\setcounter{section}{1}\setcounter{equation}{0}
In 1941, Kolmogorov \cite{[Kolmogorov]}   presented the following celebrated exact relation
 \be\label{Kolmogorov45law}
\langle[\delta v_{L}(r)]^{3} \rangle=-\f45\epsilon r,
 \ee
 where $\epsilon$ is the mean rate of
  kinetic energy dissipation per unit mass, $\delta v_{L}(r)=\delta v (r)\cdot\f{r}{|r|}=(v(x+r)-v(x))\cdot\f{r}{|r|}$ stands for the longitudinal velocity increment and $\langle\cdot\rangle$ denotes the mean value. The Kolmogorov law  \eqref{Kolmogorov45law}  plays an important role in the theory of homogenous isotropic turbulence, which can be found in the book by Frisch  \cite{[Frisch]}. And it should be noted that the derivation of Kolmogorov's $4/5$ law in \cite{[Kolmogorov]} rests on K\'arm\'an-Howarth  equation in the statistical sense. Moreover, for  alternative approaches to Kolmogorov's  $4/5$ law, the reader may refer to \cite{[NT],[Eyink1],[Rasmussen]}. Specially,
without assumptions of homogeneity and isotropy, Eyink \cite{[Eyink1]} proved the following version of
Kolmogorov's  $4/5$ law
 \be
 S_{L}(v)=-\f45D(v),
 \ee
where $$S_{L}(v)=-\lim\limits_{\lambda\rightarrow0}S_{L}(v,\lambda)=-
\lim\limits_{\lambda\rightarrow0}\f{1}{\lambda}\int_{\partial B } \ell \cdot\delta v(\lambda\ell) |\delta v_{L}(\lambda\ell)|^{2}\f{d\sigma(\ell) }{4\pi}$$
and
 \be\label{drKHMr}
D(v)=-\lim\limits_{\varepsilon\rightarrow0}\f14
\int_{\mathbb{T}^{3}}\nabla\varphi_{\varepsilon}(\ell)\cdot\delta v(\ell)|\delta v_{L}(\ell)|^{2}d\ell,\ee
here, $\sigma(x)$ stands for the surface measure on the sphere $\partial B=\{x\in \mathbb{R}^{3}: |x|=1\}$ and $\varphi$ is  some smooth non-negative function  supported in $\mathbb{T}^{3}$ with unit integral and $\varphi_{\varepsilon}(x)=\varepsilon^{-3}\varphi(\f{x}{\varepsilon})$. It should be remarked that the  dissipation term $D(v)$  was  first defined by
Duchon-Robert in \cite{[DR]},  in which they studied   the local energy dissipation for the  weak solutions of the Euler equations and the Navier-Stokes equations and
 proved that  the following local energy equation
$$ \partial_{t}(\f12|v|^{2})+\text{div}\B(v(\f{1}{2}|v|^{2}+\Pi)\B)=D(v),
$$
holds in the sense of distributions on $(0,T)\times \mathbb{T}^3$.
Besides, the authors in \cite{[DR]} also showed the following  4/3 law
\be\label{DR4/3law}
S(v)=-\f43D(v),
\ee
where $S(v)=-\lim\limits_{\lambda\rightarrow0}S (v,\lambda)=-
\lim\limits_{\lambda\rightarrow0}\f{1}{\lambda}\int_{\partial B }\ell\cdot\delta v(\lambda\ell)|\delta v(\lambda\ell)|^{2}\f{d\sigma(\ell) }{4\pi}$. It should be noted that  the equation \eqref{DR4/3law} corresponds to the following $4/3$ law
   \be\label{MY4/3law}
\langle\delta v_{L}(r)|\delta v(r)|^{2} \rangle=-\f43\epsilon r,
 \ee
 which is attributed to Monin and Yaglom in \cite{[MY]} and
usually  called as Yaglom's law  in physics.
On the other hand, analogously to Kolomogorov's $4/5$ law, Yaglom in \cite{[Yaglom]} also derived  the following  $4/3$ law when considering  the local structure of the temperature field in a turbulent flow
  \be\label{Yaglom4/3law}
\langle\delta v_{L}(r)|\delta \theta(r)|^{2} \rangle=-\f43\epsilon_{\theta} r,
 \ee
 where $\theta$
 is the temperature field and $\epsilon_{\theta} $ represents the mean dissipation rate of general energy $\f12\theta^{2}$ per unit mass. Furthermore, the $4/3$
law is intimately related with K\'arm\'an-Howarth-Monin relation
\be\label{KHMrelation}
\nabla\cdot\langle\delta v_{L}(r)|\delta v(r)|^{2} \rangle=-4\epsilon,\ee
see \cite{[Frisch],[Podesta],[Monin],[PFS]}. The Yaglom relation for any dimension and Yaglom relation inequality were considered by
Eyink in \cite{[Eyink2]}.
Moreover, in \cite{[Eyink1]}, Eyink  pointed out that \eqref{drKHMr} is closely connected with \eqref{KHMrelation}.

In addition, an analogue of  Yaglom's law \eqref{Yaglom4/3law} in the incompressible magnetohydrodynamic turbulence
   \be\label{MHDYaglom4/3lawElsasser variables}\ba
\langle\delta u_{L}(r)|\delta h(r)|^{2} \rangle=-\f43\epsilon_{+} r,\\
\langle\delta h_{L}(r)|\delta u(r)|^{2} \rangle=-\f43\epsilon_{-} r,\\
 \ea\ee
were discovered by Politano and Pouquent in \cite{[PP1],[PP2]}, where K\'arm\'an-Howarth  type equation  for the MHD equations \eqref{mhdElsasser} in terms of the Els\"asser variables
\be\label{Elsasser}
u=v+b, h=v-b,
\ee
   were obtained and $\epsilon_{\pm}$ stands for the energy
transfer and dissipation rates of the variables $u$ and $h$.  Here, $v$ and $ b$ describe the flow velocity field and the magnetic field, respectively.  Besides, in light of the primitive variables, Politano and Pouquent found that
the exact relationship for  third-order structure functions  relation \eqref{MHDYaglom4/3lawElsasser variables} reduces to
   \be\label{MHDYaglom4/3lawprimitive variables}\ba
   \langle\delta v_{L}(r)|\delta v(r)|^{2} \rangle+\langle\delta v_{L}(r)|\delta b(r)|^{2} \rangle-2\langle\delta b_{L}(r)|\delta v(r)\delta b(r)|  \rangle=-\f43\epsilon_{1} r,\\
   -\langle\delta v_{L}(r)|\delta b_{L}(r)|^{2} \rangle-\langle\delta b_{L}(r)|\delta v(r)|^{2} \rangle+2\langle\delta v_{L}(r)|\delta v(r)\delta b(r)|  \rangle=-\f43\epsilon_{2} r, \\
 \ea\ee
 where $\epsilon_{1} =\f{ \epsilon_{+}+ \epsilon_{-}}{2}$ and
 $\epsilon_{2} =\f{ \epsilon_{+}- \epsilon_{-}}{2}$ denote the  dissipation rates of the total energy  $\int_{\mathbb{T}^3}\f{v^{2}+b^{2}}{2}dx$  and the cross-helicity $\int_{\mathbb{T}^3}v\cdot b~dx$, respectively. It is worth remarking that without the application of Els\"asser variables equations and K\'arm\'an-Howarth  type equation for the primitive  MHD system \eqref{MHD}, the derivation of \eqref{MHDYaglom4/3lawprimitive variables} was given by Podesta in \cite{[Podesta]}.

Motivated by the works \cite{[DR],[Eyink1]}, the first objective of this paper is to  establish the  Yaglom's $4/3$ law in  various  hydrodynamic equations in the sense as \eqref{DR4/3law}. Now, we state our first result concerning the Yaglom's law for the temperature equation as follows.
 \begin{theorem}\label{the1.1}
 Let  the pair
 $(v,\theta)$ be a weak solution of the temperature equation
 \be\label{temperature equation}
 \theta_{t}+v\cdot\nabla \theta =0,~\text{div\,} v=0.
 \ee Assume that for any $1<p,q,m,n<\infty$ with  $\f2p+\f1m=1,\f2q+\f1n=1 $ such that $(\theta, v)$ satisfies
 \be\label{the1.1c}
\theta \in L^{\infty}(0,T;L^{2}(\mathbb{T}^{3}))\cap L^{p}(0,T;L^{q}(\mathbb{T}^{3}))\ \text{and}\ v\in L^{m}(0,T;L^{n}(\mathbb{T}^{3})). \ee
  Then the function
 $D_{\varepsilon}(v,\theta)=-\f14\int_{\mathbb{T}^{3}}\nabla\varphi_{\varepsilon}(\ell)\cdot\delta v(\ell)|\delta  \theta(\ell)|^{2}d\ell$ converges to a distribution $D(\theta,v)$ in the sense of distributions as $\varepsilon\rightarrow0$, and $D(\theta,v)$ satisfies the local equation of energy
 $$ \partial_{t}(\f12|\theta|^{2})  +\text{div}(\f12 v\theta^{2})=D(v,\theta),
$$
in the sense of distributions.
Moreover, there holds the following $4/3$ law
\be\label{DRYaglom4/3law}
S(v,\theta,\theta)=-\f43D(v,\theta),
\ee
where $S(v,\theta,\theta)=-\lim\limits_{\lambda\rightarrow0}S (v,\theta,\theta;\lambda)=-
\lim\limits_{\lambda\rightarrow0}\f{1}{\lambda}\int_{\partial B }\ell\cdot\delta v (\lambda\ell)|\delta \theta(\lambda\ell)|^{2}\f{d\sigma(\ell) }{4\pi}$.
 \end{theorem}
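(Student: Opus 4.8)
The plan is to adapt the Duchon-Robert regularization scheme to the passive scalar $\theta$ and then to rewrite the resulting defect as a spherical average of velocity-temperature increments in order to read off \eqref{DRYaglom4/3law}. Fix the mollifier $\varphi$ as in the statement (which we may take even) and write $f^{\varepsilon}:=\varphi_{\varepsilon}\ast f$. First I would use $\mathrm{div}\,v=0$ to recast \eqref{temperature equation} as $\theta_{t}+\mathrm{div}(v\theta)=0$ and mollify it, obtaining $\theta^{\varepsilon}_{t}+\mathrm{div}(v\theta)^{\varepsilon}=0$ pointwise on $(0,T)\times\mathbb{T}^{3}$; the convolutions are legitimate because $\theta\in L^{\infty}(0,T;L^{2})$ and, by \eqref{the1.1c} with $\tfrac2p+\tfrac1m=\tfrac2q+\tfrac1n=1$, the product $v\theta^{2}$ (and hence $v\theta$) lies in $L^{1}((0,T)\times\mathbb{T}^{3})$ --- in fact these exponent relations are exactly the borderline condition $v\theta^{2}\in L^{1}$. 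Multiplying the mollified equation by $\theta^{\varepsilon}$ and using $\mathrm{div}\,v^{\varepsilon}=0$ gives
$$\partial_{t}\Big(\tfrac12|\theta^{\varepsilon}|^{2}\Big)+\mathrm{div}\Big(\tfrac12 v^{\varepsilon}|\theta^{\varepsilon}|^{2}\Big)=v^{\varepsilon}\theta^{\varepsilon}\cdot\nabla\theta^{\varepsilon}-\theta^{\varepsilon}\,\mathrm{div}(v\theta)^{\varepsilon}=:T_{\varepsilon}.$$

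The key step is the Duchon-Robert algebraic identity $T_{\varepsilon}=D_{\varepsilon}(v,\theta)+R_{\varepsilon}$ with $R_{\varepsilon}\to0$ in $\mathcal{D}'$. One obtains it by expanding $\delta v(\ell)=v(\cdot+\ell)-v(\cdot)$ and $|\delta\theta(\ell)|^{2}=\theta(\cdot+\ell)^{2}-2\theta(\cdot)\theta(\cdot+\ell)+\theta(\cdot)^{2}$ inside $D_{\varepsilon}$ and repeatedly using $\int_{\mathbb{T}^{3}}\nabla\varphi_{\varepsilon}(\ell)\,F(\cdot+\ell)\,d\ell=-\nabla(F^{\varepsilon})$, $\int\nabla\varphi_{\varepsilon}=0$ and $\mathrm{div}\,v=0$; this recasts $D_{\varepsilon}$ as a fixed linear combination of $\mathrm{div}\,\overline{v\theta^{2}}$, $\theta\,\mathrm{div}\,\overline{v\theta}$, $v\cdot\nabla\overline{\theta^{2}}$ and $\theta\,v\cdot\nabla\overline{\theta}$, and after subtracting $T_{\varepsilon}$ the remainder $R_{\varepsilon}$ is built solely from \emph{full} convolved products ($v\theta^{2}$, $v\theta$, $\theta^{2}$, $\theta$), never from an isolated $\nabla\theta^{\varepsilon}$ --- this structural cancellation, rather than a crude commutator estimate, is what makes $R_{\varepsilon}\to 0$ in $\mathcal{D}'$ hold under only the borderline integrability. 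Letting $\varepsilon\to0$, the left side of the display converges in $\mathcal{D}'$ to $\partial_{t}(\tfrac12|\theta|^{2})+\mathrm{div}(\tfrac12 v\theta^{2})$ (once more by $L^{1}$-convergence of mollifications, using $v\theta^{2}\in L^{1}$), and $R_{\varepsilon}\to0$, so $D_{\varepsilon}(v,\theta)=T_{\varepsilon}-R_{\varepsilon}\to D(v,\theta):=\partial_{t}(\tfrac12|\theta|^{2})+\mathrm{div}(\tfrac12 v\theta^{2})$, which is the asserted local energy equation (and displays $D$ as independent of the admissible $\varphi$).

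For \eqref{DRYaglom4/3law} I would take $\varphi$ radial, $\varphi(x)=\psi(|x|)$, so that $\nabla\varphi_{\varepsilon}(\ell)=\varepsilon^{-4}\psi'(|\ell|/\varepsilon)\,\ell/|\ell|$. Writing the defining integral for $D_{\varepsilon}$ in polar coordinates $\ell=\rho\omega$, $\omega\in\partial B$, identifying the inner spherical integral as $4\pi\rho^{3}S(v,\theta,\theta;\rho)$, and substituting $\rho=\varepsilon\lambda$ (so that all powers of $\varepsilon$ cancel) yields
$$D_{\varepsilon}(v,\theta)=-\pi\int_{0}^{\infty}\psi'(\lambda)\,\lambda^{3}\,S(v,\theta,\theta;\varepsilon\lambda)\,d\lambda .$$
Since $\int_{\mathbb{R}^{3}}\varphi=1$ forces $\int_{0}^{\infty}\psi(\lambda)\lambda^{2}\,d\lambda=\tfrac1{4\pi}$, an integration by parts gives $-\pi\int_{0}^{\infty}\psi'(\lambda)\lambda^{3}\,d\lambda=\tfrac34$. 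Granting that $S(v,\theta,\theta;\mu)$ converges in $\mathcal{D}'$ as $\mu\to0$, dominated convergence in the $\lambda$-integral then forces $D(v,\theta)=\tfrac34\lim_{\mu\to0}S(v,\theta,\theta;\mu)$, i.e. $S(v,\theta,\theta)=-\lim_{\mu\to0}S(v,\theta,\theta;\mu)=-\tfrac43 D(v,\theta)$.

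The hard part is exactly that last proviso: showing $\langle S(v,\theta,\theta;\mu),\phi\rangle$ converges as $\mu\to0^{+}$ for every $\phi\in C_{c}^{\infty}$ (which then also supplies the domination used above). This is the temperature analogue of the structure-function convergence in the Duchon-Robert $4/3$ law \cite{[DR]} and must be established directly: realizing $S(v,\theta,\theta;\mu)$ as a surface integral over $\partial B_{\mu}$, translating so that $v(\cdot+\mu\omega)$ is carried onto $v(\cdot)$ and invoking $\mathrm{div}\,v=0$, one converts one factor $\delta v(\mu\omega)$ into a second difference of $\theta$ plus a gradient landing on $\phi$ --- the resulting power of $\mu$ absorbing the prefactor $1/\mu$ --- and uses \eqref{temperature equation} to exchange $v\cdot\nabla\theta$ for $-\theta_{t}$; the surviving terms are then Cauchy in $\mu$ by the continuity of translations in $L^{q}$ and $L^{n}$ together with the same H\"older inequality governed by $\tfrac2p+\tfrac1m=\tfrac2q+\tfrac1n=1$. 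The remaining systems of the paper (the inviscid MHD, Euler, Oldroyd-B and the subgrid $\alpha$-models) fit the same three-step template, the only new feature being the extra pressure-type or elastic terms, which are exact divergences and disappear in $\mathcal{D}'$.
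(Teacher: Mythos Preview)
Your polar–coordinate derivation of the $4/3$ constant is exactly the paper's argument \eqref{key2}--\eqref{key3}, and your observation that the existence of $\lim_{\mu\to0}S(v,\theta,\theta;\mu)$ is being \emph{assumed} rather than proved is perceptive --- the paper simply passes the limit through the $r$-integral without comment, so your caveat there is honest rather than a deficiency of your write-up.

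The genuine gap is in the first half. You multiply the mollified equation by $\theta^{\varepsilon}$, obtaining
\[
\partial_t\Big(\tfrac12|\theta^{\varepsilon}|^{2}\Big)+\mathrm{div}\Big(\tfrac12 v^{\varepsilon}|\theta^{\varepsilon}|^{2}\Big)=T_{\varepsilon},
\]
and then assert $T_{\varepsilon}=D_{\varepsilon}(v,\theta)+R_{\varepsilon}$ with $R_{\varepsilon}\to0$. But the expansion of $D_{\varepsilon}$ (the paper's identity \eqref{key1}) produces terms whose \emph{external} factors are the unconvolved $\theta$ and $v$, namely
\[
-4D_{\varepsilon}=\partial_{i}\Big(v_{i}(\theta^{2})^{\varepsilon}-(v_{i}\theta^{2})^{\varepsilon}\Big)+2\theta\,\partial_{i}(v_{i}\theta)^{\varepsilon}-2v_{i}\theta\,\partial_{i}\theta^{\varepsilon},
\]
whereas your $T_{\varepsilon}=v_{i}^{\varepsilon}\theta^{\varepsilon}\partial_{i}\theta^{\varepsilon}-\theta^{\varepsilon}\partial_{i}(v_{i}\theta)^{\varepsilon}$ carries only $\theta^{\varepsilon},v^{\varepsilon}$ outside. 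Subtracting, $R_{\varepsilon}=D_{\varepsilon}-T_{\varepsilon}$ inevitably contains pieces like $(\theta^{\varepsilon}-\tfrac12\theta)\partial_{i}(v_{i}\theta)^{\varepsilon}$ and $(\tfrac12 v_{i}\theta-v_{i}^{\varepsilon}\theta^{\varepsilon})\partial_{i}\theta^{\varepsilon}$, i.e.\ factors that do \emph{not} tend to zero paired with isolated gradients $\partial_i\theta^{\varepsilon}$ that blow up. Your claim that ``$R_{\varepsilon}$ is built solely from full convolved products \dots\ never from an isolated $\nabla\theta^{\varepsilon}$'' is therefore not borne out by the algebra, and under the borderline hypothesis \eqref{the1.1c} you cannot simply commutator-estimate these terms away.

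The paper avoids this by using the \emph{symmetric} Duchon--Robert test: multiply the mollified equation by $\theta$ and the original equation by $\theta^{\varepsilon}$, then add. The left side becomes $\tfrac12\partial_t(\theta\theta^{\varepsilon})+\tfrac12\partial_i(v_i\theta\theta^{\varepsilon})$, and the right side is \emph{exactly} $-\tfrac12\big(\partial_i(v_i\theta)^{\varepsilon}\theta-v_i\theta\,\partial_i\theta^{\varepsilon}\big)$, which matches the last two terms of \eqref{key1} on the nose; the only leftover is the single commutator $\partial_i\big(v_i(\theta^2)^{\varepsilon}-(v_i\theta^2)^{\varepsilon}\big)$, which tends to zero in $\mathcal D'$ directly from $v\theta^2\in L^1$. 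Replacing your multiplier $\theta^{\varepsilon}$ by this symmetric pairing is the one-line fix.
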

 \begin{remark}
It is shown in Theorem \ref{the1.1} that the Yaglom's law \eqref{Yaglom4/3law} is valid in the sense of Duchon-Robert \cite{[DR]} and Eyink \cite{[Eyink1]} for the temperature equation. Similar results for the viscous temperature  equation can also be proved. It is worth remarking that an analogue of the   Yaglom's relation \eqref{DRYaglom4/3law} for the quasi-geostrophic equation also holds. We leave this to the interested readers.
 \end{remark}
Next, we consider the Yaglom's $4/3$ law for the  magnetohydrodynamic turbulence  in the following theorem.
  \begin{theorem}\label{the1.2}
 Let $(u,h)\in  L^{\infty}(0,T;L^{2}(\mathbb{T}^{3}))\cap L^{3}(0,T;L^{3}(\mathbb{T}^{3}))$   be a weak solution to the inviscid MHD equations in terms of Els\"asser variables
 \be\left\{\ba\label{mhdElsasser}
&u_{t}+ h\cdot\nabla u+\nabla\Pi =0, \\
&h_{t}+u\cdot\nabla h +\nabla\Pi =0, \\
&\Div u=\Div h=0.
 \ea\right.\ee
  Then the functions
 $D_{\varepsilon}( u,h)=-\f14\int_{\mathbb{T}^{3}}\nabla\varphi_{\varepsilon}(\ell)\cdot\delta u(\ell)|\delta  h(\ell)|^{2}d\ell$ and $D_{\varepsilon}(h,u)=-\f14\int_{\mathbb{T}^{3}}\nabla\varphi_{\varepsilon}(\ell)\cdot\delta h(\ell)|\delta  u(\ell)|^{2}d\ell$ converge respectively to $D(u,h)$ and $D( h,u) $ in the sense of distributions as $\varepsilon\rightarrow0$, and $D(u,h)$ and $D(h,u)$ satisfy the local equation of energy
 $$ \ba
  &\partial_{t}(\f12|u|^{2})  +\text{div}\B[h(\f12u ^{2}+\Pi)\B] = D(u,h),\\
   &\partial_{t}(\f12|h|^{2})  +\text{div}\B[u(\f12 h ^{2}+\Pi)\B] =D(h,u),
\ea$$
in the sense of distributions.
Moreover, there holds
\be\label{DRYaglom4/3lawmhd}
S(u,h,h)=-\f43D(u,h),~~
S(h,u,u)=-\f43D( h,u),\ee
where $S(u,h,h)=-\lim\limits_{\lambda\rightarrow0}S ( u,h,h;\lambda)=-
\lim\limits_{\lambda\rightarrow0}\f{1}{\lambda}\int_{\partial B } \ell \cdot\delta u (\lambda\ell)|\delta h(\lambda\ell)|^{2} \f{d\sigma(\ell) }{4\pi}$ and $S( h,u,u)=-\lim\limits_{\lambda\rightarrow0}S (h, u,u;\lambda)=-
\lim\limits_{\lambda\rightarrow0}\f{1}{\lambda}\int_{\partial B } \ell \cdot\delta h(\lambda\ell)|\delta u(\lambda\ell)|^{2}\f{d\sigma(\ell) }{4\pi}$.
 \end{theorem}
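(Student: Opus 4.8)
\emph{Proof strategy.} The plan is to run the argument of Theorem~\ref{the1.1} in parallel for the two Els\"asser equations, combined with Duchon--Robert's treatment of the pressure \cite{[DR]}: each equation in \eqref{mhdElsasser} transports $u$ (resp.\ $h$) along the divergence-free field $h$ (resp.\ $u$) with an added $\nabla\Pi$, which is exactly the structure of the temperature equation plus an Euler-type pressure. Fix $\varphi$ as in the Introduction and put $u^{\varepsilon}=\varphi_{\varepsilon}\ast u$, $h^{\varepsilon}=\varphi_{\varepsilon}\ast h$, $\Pi^{\varepsilon}=\varphi_{\varepsilon}\ast\Pi$, mollifying \eqref{mhdElsasser} in the space variable only (the time derivative being read distributionally). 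Since $u,h\in L^{3}((0,T)\times\mathbb{T}^{3})$, the tensors $h\otimes u$ and $u\otimes h$ lie in $L^{3/2}_{t,x}$, and the total pressure, recovered on the torus through $\Pi=\partial_{i}\partial_{j}(-\Delta)^{-1}(h_{i}u_{j})$, is a Calder\'on--Zygmund transform of an $L^{3/2}_{t,x}$ function; hence $\Pi\in L^{3/2}_{t,x}$ and $\Pi^{\varepsilon}\to\Pi$ in $L^{3/2}_{\mathrm{loc}}$. (The argument runs equally under the scaling condition $\tfrac2p+\tfrac1m=1$, $\tfrac2q+\tfrac1n=1$ of Theorem~\ref{the1.1}; I keep the stated $L^{3}$ hypothesis for brevity.)

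\emph{Regularized balance and passage to the limit.} Testing the mollified first equation against $u^{\varepsilon}$ and using $\Div u^{\varepsilon}=\Div h^{\varepsilon}=0$ turns the pressure term into a pure divergence; applying the Constantin--E--Titi/Duchon--Robert algebraic identity to the convective term then produces, in $\mathcal{D}'((0,T)\times\mathbb{T}^{3})$,
\[
\partial_{t}\tfrac12|u^{\varepsilon}|^{2}+\Div F_{\varepsilon}=\mathcal{E}_{\varepsilon}+R_{\varepsilon},
\]
where $F_{\varepsilon}\to\tfrac12|u|^{2}h+\Pi u$ in $L^{1}_{\mathrm{loc}}$, $R_{\varepsilon}$ is a finite sum of Constantin--E--Titi commutator terms together with their divergences, and $\mathcal{E}_{\varepsilon}$ is, up to sign, one of the increment functionals $D_{\varepsilon}(u,h)$, $D_{\varepsilon}(h,u)$ of the statement, matched to this balance as stated there. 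Continuity of translations in $L^{3}$ gives $\sup_{|y|\le\varepsilon}\|\delta u(y)\|_{L^{3}_{t,x}}\to0$ and $\sup_{|y|\le\varepsilon}\|\delta h(y)\|_{L^{3}_{t,x}}\to0$ as $\varepsilon\to0$, so H\"older's inequality ($\delta u\in L^{3}$, $|\delta h|^{2}\in L^{3/2}$, product in $L^{1}$) gives $\|R_{\varepsilon}\|_{L^{1}_{\mathrm{loc}}}\to0$, hence $R_{\varepsilon}\to0$ in $\mathcal{D}'$; at the same time $u^{\varepsilon}\to u$, $h^{\varepsilon}\to h$ in $L^{3}_{\mathrm{loc}}$ and $\Pi^{\varepsilon}\to\Pi$ in $L^{3/2}_{\mathrm{loc}}$, so the left-hand side converges in $\mathcal{D}'$. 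Therefore $\mathcal{E}_{\varepsilon}$ converges in $\mathcal{D}'$ to a distribution, the $D$ of the statement, which satisfies the asserted local energy equation. The second equation tested against $h^{\varepsilon}$ gives the companion balance in the same way, so $D_{\varepsilon}(u,h)$ and $D_{\varepsilon}(h,u)$ both converge in $\mathcal{D}'$. (It is essential to read this convergence off the equation: under $L^{3}$ regularity alone the a priori $L^{1}$ norm of $D_{\varepsilon}$ blows up like $\varepsilon^{-1}$, and only the cancellation encoded in the PDE rescues it.)

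\emph{The $4/3$ law.} Take $\varphi$ radial, $\varphi(\ell)=\eta(|\ell|)$, so $\nabla\varphi_{\varepsilon}(\ell)=\varepsilon^{-4}\eta'(|\ell|/\varepsilon)\,\ell/|\ell|$. Writing $D_{\varepsilon}(u,h)$ in polar coordinates $\ell=\lambda\omega$ and using $\int_{\partial B}\omega\cdot\delta u(\lambda\omega)\,|\delta h(\lambda\omega)|^{2}\,d\sigma(\omega)=4\pi\lambda\,S(u,h,h;\lambda)$, the substitution $\lambda=\varepsilon\rho$ cancels all powers of $\varepsilon$ and yields the exact identity
\[
D_{\varepsilon}(u,h)=-\pi\int_{0}^{\infty}\rho^{3}\,\eta'(\rho)\,S(u,h,h;\varepsilon\rho)\,d\rho,
\]
so $D_{\varepsilon}(u,h)$ is a fixed $\varphi$-weighted average of the spherical third-order structure functions at scales $\asymp\varepsilon$. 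Integration by parts and the normalisation $4\pi\int_{0}^{\infty}\rho^{2}\eta(\rho)\,d\rho=\int_{\mathbb{R}^{3}}\varphi=1$ give $\int_{0}^{\infty}\rho^{3}\eta'(\rho)\,d\rho=-3/(4\pi)$; letting $\varepsilon\to0$ and invoking $D_{\varepsilon}(u,h)\to D(u,h)$ then forces $-\lim_{\lambda\to0}S(u,h,h;\lambda)=-\tfrac43D(u,h)$, which is the first relation in \eqref{DRYaglom4/3lawmhd}; the second follows identically from $D_{\varepsilon}(h,u)$ and $S(h,u,u;\lambda)$.

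\emph{Main obstacle.} The commutator bookkeeping is essentially a transcription of \cite{[DR]} and Theorem~\ref{the1.1}: replacing the self-advection $v\cdot\nabla v$ by the cross-advection $h\cdot\nabla u$ changes none of the estimates and the pressure contributes only a divergence, so I expect it to be routine. The genuinely delicate point is the last step, namely extracting from the convergence of the $\varphi$-averages $D_{\varepsilon}(u,h)$ both the existence and the value of the spherical limit $\lim_{\lambda\to0}S(u,h,h;\lambda)$. As in \cite{[DR]} this should be done distributionally, by testing the average identity against $\phi\in C^{\infty}_{c}$ and exploiting that it holds for every admissible radial $\varphi$; I expect this interchange of the limits $\varepsilon\to0$ and $\lambda\to0$ to be the main obstacle, the rest being a faithful adaptation of the temperature case with an Euler pressure adjoined.
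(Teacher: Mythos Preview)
Your overall strategy coincides with the paper's: mollify the Els\"asser system, derive a regularized energy balance whose defect term is shown to equal the stated $D_{\varepsilon}$ up to commutators that vanish in $L^{1}$ under the $L^{3}$ hypothesis (with $\Pi\in L^{3/2}$ via Calder\'on--Zygmund), then pass to the limit; for the $4/3$ law, rewrite $D_{\varepsilon}$ in polar coordinates and use $\int_{0}^{\infty}r^{3}\varphi'(r)\,dr=-3/(4\pi)$ exactly as you do. The paper also leaves the interchange of limits you flag as ``the main obstacle'' at the same formal level, so you are not missing anything that the paper supplies.

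One point of departure worth flagging: you propose to test the mollified first equation against $u^{\varepsilon}$, which is the Constantin--E--Titi scheme and yields a commutator defect of the form $u^{\varepsilon}_{j}\partial_{i}\bigl[(h_{i}u_{j})^{\varepsilon}-h_{i}^{\varepsilon}u_{j}^{\varepsilon}\bigr]$, \emph{not} directly the increment functional $D_{\varepsilon}$ of the statement. The paper instead uses the symmetric Duchon--Robert scheme: multiply the mollified equation by the unmollified $u$, the original equation by $u^{\varepsilon}$, and add. This produces on the left $\partial_{t}(u_{j}u_{j}^{\varepsilon})$ and on the right the specific combination $-\tfrac12\bigl(\partial_{i}(h_{i}u_{j})^{\varepsilon}u_{j}-h_{i}u_{j}\partial_{i}u_{j}^{\varepsilon}\bigr)$, which is then identified with $-\tfrac14\int\nabla\varphi_{\varepsilon}\cdot\delta h\,|\delta u|^{2}$ via the expansion of $\delta h\,|\delta u|^{2}$ (the analogue of \eqref{key1}). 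Your route can be completed, but it requires an extra identity linking the CET commutator to the stated $D_{\varepsilon}$; switching to the symmetric testing eliminates that step and matches the paper line for line.
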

 \begin{remark}
It should be noted that this theorem is consistent with  the result \eqref{MHDYaglom4/3lawElsasser variables} by Politano and Pouquent  in \cite{[PP1],[PP2]}.
 \end{remark}
 By means of
 the relationship \eqref{Elsasser} between the  Els\"asser variables  $u ,h $   and the primitive variables $v,b$ in the MHD equations, we  immediately deduce from Theorem \ref{the1.2} that
 \begin{coro}\label{coro1.3}
 Let   $D_{E}( v,b )=\lim\limits_{\varepsilon\rightarrow0}D_{E}( v,b,\varepsilon)$,
  $D_{CH}( v,b )=\lim\limits_{\varepsilon\rightarrow0}D_{CH}( v,h,\varepsilon)$, where
 \be\ba\label{1.14}
 D_{E}( v,b;\varepsilon)=-\f14\int_{\mathbb{T}^{3}}\nabla\varphi_{\varepsilon}(\ell)\cdot\delta v(\ell)(|\delta  v(\ell) |^{2} +|\delta  b(\ell)|^{2})d\ell+\f12\int_{\mathbb{T}^{3}}\nabla\varphi_{\varepsilon}(\ell)\cdot\delta b(\ell)|\delta  v(\ell)\cdot\delta  b(\ell)| d\ell,\ea\ee
 $$D_{CH}( v,b;\varepsilon)=-\f12\int_{\mathbb{T}^{3}}\nabla\varphi_{\varepsilon}(\ell)\cdot\delta v(\ell) |\delta  v(\ell)\delta  b(\ell)| d\ell+\f14\int_{\mathbb{T}^{3}}\nabla\varphi_{\varepsilon}(\ell)\cdot\delta b(\ell)(| \delta  v(\ell)|^{2}|+| \delta  b(\ell)|^{2}|) d\ell,$$ and
\be\ba \label{1.15}&S_{1}(v,v,v)=-\lim\limits_{\lambda\rightarrow0}S_{1} ( v,v,v;\lambda)=-
\lim\limits_{\lambda\rightarrow0}\f{1}{\lambda}\int_{\partial B } \ell \cdot\delta v (\lambda\ell)|\delta v(\lambda\ell)|^{2}\f{d\sigma(\ell) }{4\pi},\\
&S_{2}(v,b,b)=-\lim\limits_{\lambda\rightarrow0}S_{2} ( v,b,b;\lambda)=-
\lim\limits_{\lambda\rightarrow0}\f{1}{\lambda}\int_{\partial B } \ell \cdot\delta v (\lambda\ell)|\delta b(\lambda\ell)|^{2}\f{d\sigma(\ell) }{4\pi},
\\
&S_{3}(b,v,b)=-\lim\limits_{\lambda\rightarrow0}S _{3}(b,v,b;\lambda)=-
\lim\limits_{\lambda\rightarrow0}\f{1}{\lambda}\int_{\partial B } \ell \cdot\delta b (\lambda\ell)|\delta v(\lambda\ell)\cdot\delta b(\ell)| \f{d\sigma(\ell) }{4\pi},\\
&S_{4}(v,v,b)=-\lim\limits_{\lambda\rightarrow0}S _{4}( v,v,b;\lambda)=-
\lim\limits_{\lambda\rightarrow0}\f{1}{\lambda}\int_{\partial B } \ell \cdot\delta v (\lambda\ell)|\delta v(\lambda\ell)\cdot\delta b(\lambda\ell)| \f{d\sigma(\ell) }{4\pi},\\
&S_{5}(b,v,v)=-\lim\limits_{\lambda\rightarrow0}S_{5} ( b,v,v;\lambda)=-
\lim\limits_{\lambda\rightarrow0}\f{1}{\lambda}\int_{\partial B } \ell \cdot\delta b (\lambda\ell)|\delta v(\lambda\ell)|^{2}\f{d\sigma(\ell) }{4\pi},\\
&S_{6}(b,b,b)=-\lim\limits_{\lambda\rightarrow0}S_{6} ( b,b,b;\lambda)=-
\lim\limits_{\lambda\rightarrow0}\f{1}{\lambda}\int_{\partial B } \ell \cdot\delta b (\lambda\ell)|\delta b(\lambda\ell)|^{2}\f{d\sigma(\ell) }{4\pi}.\ea\ee
Then, there holds
\be\ba\label{1.16}
&S_{1}(v,v,v)+S_{2}(v,b,b)-2 S_{3}(b, v,b )=-\f43D_{E}( v,b ),\\&
2S_{4}( v,v,b )-S_{5}(  b,v,v )- S_{6}(b,b,b )=-\f43D_{CH}( v,b ).\ea\ee
 \end{coro}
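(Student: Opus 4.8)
The plan is to obtain \eqref{1.16} by substituting the Els\"asser variables \eqref{Elsasser} into the two identities \eqref{DRYaglom4/3lawmhd} of Theorem~\ref{the1.2} and collecting terms. First I would note that the hypothesis $(u,h)\in L^{\infty}(0,T;L^{2})\cap L^{3}(0,T;L^{3})$ is equivalent to $(v,b)\in L^{\infty}(0,T;L^{2})\cap L^{3}(0,T;L^{3})$ via $u=v+b$, $h=v-b$, so Theorem~\ref{the1.2} applies and the limits $D(u,h)$, $D(h,u)$ exist as distributions. Writing $\delta u=\delta v+\delta b$ and $\delta h=\delta v-\delta b$ inside $D_{\varepsilon}(u,h)$ and $D_{\varepsilon}(h,u)$, one expands $|\delta h|^{2}=|\delta v|^{2}-2\,\delta v\cdot\delta b+|\delta b|^{2}$ and $|\delta u|^{2}=|\delta v|^{2}+2\,\delta v\cdot\delta b+|\delta b|^{2}$, and distributes the $\delta u=\delta v+\delta b$ (resp.\ $\delta h=\delta v-\delta b$) factor against $\nabla\varphi_{\varepsilon}(\ell)$. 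Summing and subtracting the two resulting expressions produces exactly the combinations $D_{E}(v,b;\varepsilon)$ and $D_{CH}(v,b;\varepsilon)$ defined in \eqref{1.14}, up to the same algebra that, at the level of the structure functions on $\partial B$, turns $S(u,h,h)\pm S(h,u,u)$ into $S_{1}+S_{2}-2S_{3}$ (resp.\ $2S_{4}-S_{5}-S_{6}$).

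Concretely, I would first establish the two distributional identities
$$
D_{E}(v,b;\varepsilon)=\tfrac12\bigl(D_{\varepsilon}(u,h)+D_{\varepsilon}(h,u)\bigr),\qquad
D_{CH}(v,b;\varepsilon)=\tfrac12\bigl(D_{\varepsilon}(u,h)-D_{\varepsilon}(h,u)\bigr),
$$
which are purely algebraic once the increments are expanded; passing to the limit $\varepsilon\to0$ then defines $D_{E}(v,b)$ and $D_{CH}(v,b)$ and shows they equal $\tfrac12(D(u,h)\pm D(h,u))$. In parallel, at the level of the spherical averages $S(\cdot,\cdot,\cdot;\lambda)$ I would expand $\ell\cdot\delta u(\lambda\ell)\,|\delta h(\lambda\ell)|^{2}$ and $\ell\cdot\delta h(\lambda\ell)\,|\delta u(\lambda\ell)|^{2}$ in the primitive variables and check that
$$
S(u,h,h;\lambda)+S(h,u,u;\lambda)=S_{1}(v,v,v;\lambda)+S_{2}(v,b,b;\lambda)-2S_{3}(b,v,b;\lambda)
$$
and
$$
S(u,h,h;\lambda)-S(h,u,u;\lambda)=2S_{4}(v,v,b;\lambda)-S_{5}(b,v,v;\lambda)-S_{6}(b,b,b;\lambda),
$$
the odd-in-$\delta b$ and even-in-$\delta b$ parts cancelling appropriately. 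Combining these with \eqref{DRYaglom4/3lawmhd} and the linearity of the limit $\lambda\to0$ yields \eqref{1.16}.

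The main point requiring care — rather than a genuine obstacle — is bookkeeping: matching the factors of $\tfrac14$, $\tfrac12$ and the signs between the expanded $D_{\varepsilon}(u,h)\pm D_{\varepsilon}(h,u)$ and the stated $D_{E},D_{CH}$ in \eqref{1.14}, and likewise reconciling the absolute-value notation $|\delta v\cdot\delta b|$ appearing in \eqref{1.14}–\eqref{1.15} with the signed quadratic forms $\delta v\cdot\delta b$ that actually arise from expanding $|\delta u|^{2}$ and $|\delta h|^{2}$; I would read these $|\cdot|$ as denoting the (signed) bilinear pairing consistently throughout, as is implicitly done in \eqref{MHDYaglom4/3lawprimitive variables}. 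Since each individual limit $S_{i}$ exists by the same argument that gives the existence of $S(u,h,h)$ and $S(h,u,u)$ in Theorem~\ref{the1.2} (the integrands being built from the same increments under the same integrability), no new analytic input beyond Theorem~\ref{the1.2} is needed, and the corollary follows immediately.
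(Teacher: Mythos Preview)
Your proposal is correct and follows precisely the route the paper takes: the paper's proof of Corollary~\ref{coro1.3} consists of the single sentence ``With Theorem~\ref{the1.2} in hand, we immediately get this corollary by a direct computation,'' and your expansion of $\delta u=\delta v+\delta b$, $\delta h=\delta v-\delta b$ inside $D_{\varepsilon}(u,h)$, $D_{\varepsilon}(h,u)$ and the spherical averages is exactly that computation. The only caveat is that the precise identities you display are off by harmless overall factors (for instance $S(u,h,h;\lambda)+S(h,u,u;\lambda)=2\bigl[S_{1}+S_{2}-2S_{3}\bigr]$ and $D_{CH}=\tfrac12\bigl(D_{\varepsilon}(h,u)-D_{\varepsilon}(u,h)\bigr)$), which you already flagged as bookkeeping to be checked and which cancel when combined with \eqref{DRYaglom4/3lawmhd}.
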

 It is worth remarking that we can directly obtain \eqref{1.16} by an application of the original MHD equations as follows.
 \begin{theorem}\label{the1.4}
Suppose that $(v, b)\in L^{\infty}(0,T;L^{2}(\mathbb{T}^{3}))\cap L^{3}(0,T;L^{3}(\mathbb{T}^{3}))$   be a weak solution of the MHD equations
 \be\left\{\ba\label{MHD}
&v_{t}+v\cdot\nabla v-b\cdot\nabla b+\nabla\Pi =0, \\
&b_{t}+v\cdot\nabla b-b\cdot\nabla v =0, \\
&\Div v=\Div b=0.
 \ea\right.\ee
 Here the scalar function $\Pi=\pi+\f{1}{2}b^{2}$ represents the total pressure.
 Let  $ D_{E}( v,b,\varepsilon),$ $ D_{CH}( v,b,\varepsilon)$  and        $S_{i}( v,b ) $   ($i=1,2,\cdots,6$) be defined in
 \eqref{1.14} and \eqref{1.15}.
  Then the functions $D_{E}( v,b,\varepsilon)$
 and $D_{CH}( v,b,\varepsilon) $ converge respectively to  $D_{E}( v,b )$ and $D_{CH}( v,b)$   in the sense of distributions as $\varepsilon\rightarrow0$, and   $D_{E}( v,b )$ and $D_{CH}( v,b)$ satisfy the local equation
\be \ba\label{1.18}
   &\partial_{t}(\f{v^{2}+b^{2}}{2}  )  +\text{div}\B[v\B( \f12(|v|^{2}+|b |^{2})+\Pi\B)-b(b\cdot v)\B]=D_{E}( v,b ),\\
  &\partial_{t}(  v\cdot b)  +\text{div}\B[v(v\cdot b)-\f12(|b|^{2})b-\f12bv^{2}+b\Pi\B]= D_{CH}( v,b ),
\ea\ee
in the sense of distributions.
Moreover, there holds \eqref{1.16}.
 \end{theorem}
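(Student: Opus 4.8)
The plan is to mimic the Duchon--Robert mollification argument applied directly to the primitive MHD system \eqref{MHD}, rather than passing through the Els\"asser variables. First I would fix a standard mollifier $\varphi_\varepsilon$ and set $f^\varepsilon := f * \varphi_\varepsilon$ for $f\in\{v,b,\Pi\}$. Mollifying the momentum equation and the magnetic equation gives, for each fixed $\varepsilon>0$, a smooth identity; testing the mollified momentum equation against $v^\varepsilon$ and the mollified magnetic equation against $b^\varepsilon$, then adding, produces a local balance for $\tfrac12(|v^\varepsilon|^2+|b^\varepsilon|^2)$ in which the only obstruction to exact conservation is a commutator term of the form $\nabla\cdot\big((v\otimes v)^\varepsilon - v^\varepsilon\otimes v^\varepsilon\big)\cdot v^\varepsilon$ and its $b$-analogues. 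The key algebraic input, exactly as in \cite{[DR]}, is the pointwise identity rewriting such a commutator as $-\tfrac14\nabla\varphi_\varepsilon * (\text{increment cubic in }\delta v,\delta b)$ up to terms that vanish in the limit; carrying this out for all four quadratic nonlinearities ($v\cdot\nabla v$, $b\cdot\nabla b$ in the $v$-equation and $v\cdot\nabla b$, $b\cdot\nabla v$ in the $b$-equation), collecting the cross terms $\delta v\cdot\delta b$ correctly, yields precisely $D_E(v,b;\varepsilon)$ as in \eqref{1.14}. A parallel computation, testing the mollified momentum equation against $b^\varepsilon$ and the mollified magnetic equation against $v^\varepsilon$ and adding, produces the local cross-helicity balance with $D_{CH}(v,b;\varepsilon)$.

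The second step is the convergence of $D_E(v,b;\varepsilon)$ and $D_{CH}(v,b;\varepsilon)$ to distributions as $\varepsilon\to0$. Here one uses the regularity $(v,b)\in L^\infty_tL^2_x\cap L^3_tL^3_x$: each term in $D_E$ and $D_{CH}$ is, after the change of variables $\ell=\varepsilon y$, an integral of $\nabla\varphi(y)$ against a product of three increments $\delta_{\varepsilon y}v,\delta_{\varepsilon y}b$, so it is controlled in $L^1_{t,x}$ by $\|v\|_{L^3}^{a}\|b\|_{L^3}^{3-a}$ uniformly in $\varepsilon$ after integrating the $y$-variable against the compactly supported $\nabla\varphi$; standard continuity of translation in $L^3$ then gives a Cauchy-in-distributions argument, so the limits $D_E(v,b)$, $D_{CH}(v,b)$ exist and are independent of $\varphi$, and passing to the limit in the mollified balances establishes \eqref{1.18}. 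One subtlety is that the pressure terms $\nabla\Pi$ contribute $\nabla\cdot(\Pi^\varepsilon v^\varepsilon)$ etc.\ which are exact divergences and pass to the limit trivially given $\Pi\in L^{3/2}$ (from the elliptic equation for $\Pi$ and $v,b\in L^3$); this must be noted but is routine.

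The third step is the $4/3$ law \eqref{1.16}. For this I would run the spherical-average computation of \cite{[Eyink1]}: split each $D$-integrand using $\nabla\varphi_\varepsilon(\ell) = \varepsilon^{-1}(\nabla\varphi)(\ell/\varepsilon)$, write $\ell$ in polar coordinates $\ell=\lambda\omega$, and integrate the radial variable against $\varphi$. The incompressibility constraints $\Div v=\Div b=0$ are used to discard the non-radial (transverse) part of each increment after the angular integration — this is the analogue of the fact that in \eqref{DR4/3law} one recovers the longitudinal structure function $\ell\cdot\delta v$. Matching the radial weights, one finds each $D$-term equals $-\tfrac34$ times the corresponding limit of spherical averages $S_i(v,b;\lambda)$, and summing with the combinatorial coefficients $1,1,-2$ (resp.\ $2,-1,-1$) gives \eqref{1.16}. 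The main obstacle I anticipate is not any single estimate but the bookkeeping: one must track the cross term $\delta v\cdot\delta b$ through the commutator identities consistently so that the coefficients $\tfrac12$ and the sign conventions in $D_E$, $D_{CH}$ match those produced by the two different testings, and then verify that the same coefficients emerge from the Els\"asser-variable route (Corollary \ref{coro1.3}) so that Theorem \ref{the1.4} is genuinely consistent with Theorem \ref{the1.2}; this cross-check is where a sign error would most easily hide.
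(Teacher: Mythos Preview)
Your plan is essentially the Duchon--Robert mollification strategy the paper follows, and it would succeed, but two details differ from the paper's actual execution and deserve correction.

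First, the paper does \emph{not} test the mollified equations against the mollified fields $v^\varepsilon,b^\varepsilon$ as you describe. Instead it uses the symmetrized pairing: multiply the mollified $v$-equation by the \emph{unmollified} $b_j$ and the original $v$-equation by $b_j^\varepsilon$, do the same with the roles of $v$ and $b$ swapped in the magnetic equation, and add. This produces quantities like $\partial_t(v_j^\varepsilon b_j + v_j b_j^\varepsilon)$ and commutators of the form $\partial_i(v_iv_j)^\varepsilon b_j - v_iv_j\,\partial_i b_j^\varepsilon$, which match \emph{exactly} (up to a divergence that vanishes in the limit) the expansion of $\int\nabla\varphi_\varepsilon(\ell)\cdot\delta v(\ell)\,[\delta v(\ell)\cdot\delta b(\ell)]\,d\ell$; see the identity \eqref{key1} and its variants \eqref{2.14}--\eqref{2.16}. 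Your mollified-against-mollified testing yields Constantin--E--Titi type commutators $\nabla\cdot((v\otimes v)^\varepsilon - v^\varepsilon\otimes v^\varepsilon)$ which only agree with $D_{E}(v,b;\varepsilon)$, $D_{CH}(v,b;\varepsilon)$ after further manipulation and only in the limit; the paper's asymmetric route is what makes those specific $\varepsilon$-level integrals in \eqref{1.14} appear directly. Relatedly, the paper does not run a Cauchy-in-$\varepsilon$ argument on $D_\varepsilon$: it simply observes that every term on the \emph{left} side of the approximate balance converges in $\mathcal{D}'$ (using $v,b\in L^3_{t,x}$ and $\Pi\in L^{3/2}_{t,x}$ via Calder\'on--Zygmund), hence the right side does too and its limit is automatically $\varphi$-independent.

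Second, in the $4/3$-law step you write that incompressibility is used ``to discard the non-radial (transverse) part of each increment after the angular integration.'' This is not the mechanism. The paper takes $\varphi$ radial, so $\nabla\varphi_\varepsilon(\ell)=\varepsilon^{-4}\varphi'(|\ell|/\varepsilon)\,\ell/|\ell|$ already points along $\ell$; the longitudinal factor $\ell\cdot\delta v$ is thus present from the outset, and the computation is a pure change of variables to polar coordinates followed by the scalar identity $\int_0^\infty r^3\varphi'(r)\,dr=-3/(4\pi)$ (equation \eqref{key3}). Divergence-freeness is used only in the earlier local-balance derivation, not here.
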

We notice that $\eqref{1.18}_{1}$ and $\eqref{1.16}_{1}$ have been proved by Guo-Tan-Wu in \cite{[GTW]} and Guo-Tan in \cite{[GT]}, respectively. Hence, we shall mainly focus our attention on the proof of results involving cross-helicity in this theorem. Corollary \ref{coro1.3} reproduces the corresponding result in \cite{[GTW]} for the inviscid MHD equations in terms of Els\"asser variables. In history, after discovering the cross-helicity conservation, Moffatt in \cite{[Moffatt]} first introduced the helicity law in an inviscid fluid. Moreover, Gomez-Plitano-Pouquet in \cite{[GPP]} gave the third-order structure function relation for the helicity of the Euler equations \eqref{euler}  as
\be\label{1.20}
   \langle\delta v_{L}(r)|\delta v(r)\cdot\delta \omega(r)| \rangle -\f12\langle\delta \omega_{L}(r)|\delta v(r) |^{2}  \rangle=-\f43\bar{\epsilon}  r,
\ee
 where $\bar{\epsilon}$ represents the helicity dissipation rates. Based on this, our third result is concerned with the Yaglom's law for the helicity of  ideal incompressible Euler equations. Below is our corresponding result.
  \begin{theorem}\label{the1.5}
 Let $\omega\in L^{\infty }(0,T;L^{\f{3}{2}}(\mathbb{T}^{3}))$ and $(v,\omega)$   be a weak solution of the Euler equations
 \be\left\{\ba\label{euler}
&v_{t}+ v\cdot\nabla v+\nabla\pi =0, \\
& \omega_{t}+v\cdot\nabla \omega-\omega\cdot\nabla v =0, \\
&\Div v=\Div \omega=0.
 \ea\right.\ee
Suppose  that for any $1<m,n<\infty,3\leq p,q<\infty$ with $\f2p+\f1m=1, \f2q+\f1n=1$ such that $(v,\omega)$  meets
 \be\label{the1.5c}
v\in L^{p}(0,T;L^{q}(\mathbb{T}^{3}))\ \text{and}\ \omega\in L^{m}(0,T;L^{n}(\mathbb{T}^{3})).  \ee
  Then the function
 $$D_{\varepsilon}( v,\omega)=-\f12\int_{\mathbb{T}^{3}}\nabla\varphi_{\varepsilon}(\ell)\cdot\delta v(\ell)|\delta  \omega(\ell)\cdot\delta  v(\ell)| d\ell+\f14\int_{\mathbb{T}^{3}}\nabla\varphi_{\varepsilon}(\ell)\cdot\delta \omega(\ell)|\delta  v(\ell)|^{2}d\ell$$
 converges to a distribution $D(v, \omega) $ in the sense of distributions as $\varepsilon\rightarrow0$, and $D( v,\omega)$ satisfies the local equation of helicity
 $$ \ba
  &\partial_{t}( v\cdot\omega)  +\text{div}\B[v(\omega\cdot v)-\f12\omega(|v|^{2})+\omega\pi\B]= D(v,\omega),
\ea$$
in the sense of distributions.
Moreover, there holds
\be\label{helicityYaglom4/3law}
2S_{7}(v,\omega,v)-S_{8}(\omega, v,v)=-\f43D(v,\omega),
 \ee
where $S_{7}(v,\omega,v)=-\lim\limits_{\lambda\rightarrow0}S_{7}(v,\omega,v;\lambda)=-
\lim\limits_{\lambda\rightarrow0}\f{1}{\lambda}\int_{\partial B } \ell \cdot\delta v (\ell)|\delta \omega(\lambda\ell)\cdot\delta v(\lambda\ell)| \f{d\sigma(\ell) }{4\pi}$ and $S_{8}(  \omega,v,v)=-\lim\limits_{\lambda\rightarrow0}S_{8}( \omega,v,v;\lambda)
=-\lim\limits_{\lambda\rightarrow0}\f{1}{\lambda}\int_{\partial B } \ell \cdot\delta \omega (\lambda\ell)|\delta v(\lambda\ell)|^{2}\f{d\sigma(\ell) }{4\pi}$.
 \end{theorem}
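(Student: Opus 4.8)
The plan is to mimic the Duchon--Robert regularization argument, now applied to the helicity density $v\cdot\omega$ rather than the energy. First I would mollify the Euler system \eqref{euler} at scale $\varepsilon$: writing $f^\varepsilon = f\ast\varphi_\varepsilon$, the mollified equations read $\partial_t v^\varepsilon + \Div(v\otimes v)^\varepsilon + \nabla\pi^\varepsilon = 0$ and $\partial_t\omega^\varepsilon + \Div(v\otimes\omega - \omega\otimes v)^\varepsilon = 0$. Dotting the first with $\omega^\varepsilon$ and the second with $v^\varepsilon$, adding, and using $\Div v = \Div\omega = 0$, one obtains an equation for $\partial_t(v^\varepsilon\cdot\omega^\varepsilon)$ whose right-hand side, after moving all exact-divergence terms to the left, is a commutator-type expression in the mollified quadratic nonlinearities. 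The key algebraic step is to recognize that this commutator, by the standard Constantin--E--Titi identity
$$
(fg)^\varepsilon - f^\varepsilon g^\varepsilon = \int_{\mathbb{T}^3}\varphi_\varepsilon(\ell)\,\delta f(\ell)\,\delta g(\ell)\,d\ell - (\delta_\varepsilon f)(\delta_\varepsilon g),
$$
can be rewritten so that, upon integrating by parts in $\ell$, it collapses precisely to the expression
$$
D_\varepsilon(v,\omega) = -\tfrac12\int_{\mathbb{T}^3}\nabla\varphi_\varepsilon(\ell)\cdot\delta v(\ell)\,|\delta\omega(\ell)\cdot\delta v(\ell)|\,d\ell + \tfrac14\int_{\mathbb{T}^3}\nabla\varphi_\varepsilon(\ell)\cdot\delta\omega(\ell)\,|\delta v(\ell)|^2\,d\ell
$$
advertised in the statement (the two terms reflecting, respectively, the $v\cdot\nabla v$ contribution paired with $\omega$ and the $\omega\cdot\nabla v$ transport term paired with $v$; the $\omega\cdot\nabla v$ stretching term contributes the $|\delta v|^2$ piece). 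I should be careful here with the absolute-value notation in the definition of $D_\varepsilon$, which I interpret as the scalar product $\delta\omega\cdot\delta v$ (not its modulus), consistent with the structure-function conventions in \eqref{1.15}; the algebra then matches the $-\tfrac12,\tfrac14$ coefficients coming from the helicity density being $v\cdot\omega$ rather than $\tfrac12|v|^2$.

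Next I would establish convergence of $D_\varepsilon(v,\omega)$ to a distribution $D(v,\omega)$ as $\varepsilon\to 0$. This is where the integrability hypothesis \eqref{the1.5c} enters: with $\tfrac2p+\tfrac1m=1$ and $\tfrac2q+\tfrac1n=1$, Hölder's inequality in both space and time gives
$$
\Big|\iint \nabla\varphi_\varepsilon(\ell)\cdot\delta v(\ell)\,(\delta\omega\cdot\delta v)(\ell)\,d\ell\,\psi\,dx\,dt\Big| \lesssim \varepsilon^{-1}\,\|\delta v\|^2_{L^p_tL^q_x}\,\|\delta\omega\|_{L^m_tL^n_x}\cdot\|\psi\|_\infty,
$$
and one bounds the increments by the continuity-of-translation estimate $\|\delta f(\ell)\|_{L^q_x}\to 0$ as $|\ell|\to 0$ (uniformly via density), so the $\varepsilon^{-1}$ is absorbed and the family $\{D_\varepsilon\}$ is bounded in, and Cauchy in, the relevant negative Sobolev / distribution space. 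Passing to the limit in the mollified helicity balance then yields the local helicity equation
$$
\partial_t(v\cdot\omega) + \Div\big[v(\omega\cdot v) - \tfrac12\omega|v|^2 + \omega\pi\big] = D(v,\omega)
$$
in $\mathcal{D}'((0,T)\times\mathbb{T}^3)$. The condition $\omega\in L^\infty(0,T;L^{3/2})$ together with $v\in L^\infty(0,T;L^2)\cap L^p(0,T;L^q)$ (which follows from the weak-solution framework and \eqref{the1.5c}) guarantees that each term in the flux is a well-defined distribution, and that $\pi$, recovered from $-\Delta\pi = \Div\Div(v\otimes v)$, lies in an appropriate space by Calderón--Zygmund.

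Finally, for the $4/3$ law \eqref{helicityYaglom4/3law} I would follow Eyink's averaging: fix a radial mollifier and rescale, writing $D_\varepsilon$ via the change of variables $\ell = \varepsilon\xi$, so that $\nabla\varphi_\varepsilon(\ell) = \varepsilon^{-4}(\nabla\varphi)(\xi)$ and, decomposing $\xi$ into its radial and angular parts, one integrates the radial variable against $-\partial_\rho\varphi$ (a function of $|\xi|$ alone after choosing $\varphi$ radial) and is left with a spherical average of $\hat\xi\cdot\delta v(\varepsilon\xi)\,(\delta\omega\cdot\delta v)(\varepsilon\xi)$ and $\hat\xi\cdot\delta\omega(\varepsilon\xi)\,|\delta v(\varepsilon\xi)|^2$. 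Matching the normalization $\int_0^\infty \rho^{?}(-\varphi'(\rho))\,d\rho$ produces the universal constant; the coefficients $-\tfrac12$ and $+\tfrac14$ in $D_\varepsilon$ combine with the geometric factor to give exactly $2S_7 - S_8 = -\tfrac43 D(v,\omega)$, once $S_7$ and $S_8$ are identified with the stated limits of spherical averages over $\partial B$. The main obstacle I anticipate is the first paragraph: getting the commutator algebra to close into precisely the two-term $D_\varepsilon(v,\omega)$ with the correct $-\tfrac12,\tfrac14$ split, because the vorticity equation has both a transport term $v\cdot\nabla\omega$ and a stretching term $\omega\cdot\nabla v$, and keeping track of which increment pairs with which — and why the stretching term yields a $|\delta v|^2$ contribution paired against $\delta\omega$ rather than a mixed term — requires care with the antisymmetry $v\otimes\omega - \omega\otimes v$ and an integration by parts that exploits $\Div v = 0$. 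Everything downstream (the Hölder estimates, the negative-norm compactness, the spherical-average limit) is routine once that identity is in hand.
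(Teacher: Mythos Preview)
Your overall architecture is right, and the $4/3$-law step via polar coordinates is exactly what the paper does. But there are two things to flag.

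\textbf{A different pairing than the paper.} You propose to dot the mollified $v$-equation with $\omega^\varepsilon$ and the mollified $\omega$-equation with $v^\varepsilon$, obtaining $\partial_t(v^\varepsilon\cdot\omega^\varepsilon)$ and then invoking the Constantin--E--Titi commutator identity. The paper instead uses the Duchon--Robert symmetric pairing: it takes the mollified $v$-equation times the \emph{unmollified} $\omega$, the original $v$-equation times $\omega^\varepsilon$, and likewise for the $\omega$-equation, arriving at $\partial_t(v\cdot\omega^\varepsilon + v^\varepsilon\cdot\omega)$. After product-rule manipulations this produces commutator pieces of the form $\partial_i(v_iv_j)^\varepsilon\omega_j - v_iv_j\,\partial_i\omega_j^\varepsilon$ on the right-hand side. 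The paper then \emph{separately} expands $\int\nabla\varphi_\varepsilon\cdot\delta v\,(\delta\omega\cdot\delta v)\,d\ell$ and $\int\nabla\varphi_\varepsilon\cdot\delta\omega\,|\delta v|^2\,d\ell$ by writing $\delta v = V-v$ etc., and recognizes term-by-term that these integrals equal the commutator pieces plus terms of the form $\partial_i\big(v_i(\omega_jv_j)^\varepsilon - (v_i\omega_jv_j)^\varepsilon\big)$, which vanish in the limit by the integrability hypothesis. This route is more direct than going through the CET identity, because the specific $D_\varepsilon$ in the statement falls out with no ``integration by parts in $\ell$'' needed; the matching of coefficients $-\tfrac12,\tfrac14$ is then automatic. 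Your CET route can be made to work, but you would still have to relate the resulting defect (which involves $\omega^\varepsilon$ and $v^\varepsilon$, not $\omega$ and $v$) to the specific $D_\varepsilon$.

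\textbf{A genuine gap in the convergence argument.} Your H\"older estimate gives
\[
|\langle D_\varepsilon,\psi\rangle| \lesssim \varepsilon^{-1}\,\|\delta v\|_{L^p_tL^q_x}^2\,\|\delta\omega\|_{L^m_tL^n_x}\,\|\psi\|_\infty,
\]
and you then assert that continuity of translations ``absorbs'' the $\varepsilon^{-1}$. It does not: continuity of translations gives only $\|\delta f(\ell)\|\to 0$ with no rate, so $\varepsilon^{-1}\cdot o(1)$ need not be bounded, let alone Cauchy. Under the hypotheses of the theorem there is no Besov-type smallness to trade against $\varepsilon^{-1}$. The paper's argument is indirect and is the only one available here: one shows that $D_\varepsilon$ equals the right-hand side of the mollified helicity balance \emph{plus} terms like $\partial_i\big(v_i(\omega_jv_j)^\varepsilon - (v_i\omega_jv_j)^\varepsilon\big)$ that tend to zero in $\mathcal{D}'$ by the $L^p_tL^q_x$/$L^m_tL^n_x$ assumption; since the left-hand side of the balance converges (each flux term being a product in $L^1_{t,x}$ by H\"older with the exponents $\tfrac2p+\tfrac1m=1$, $\tfrac2q+\tfrac1n=1$, and $\pi\in L^{p/2}_tL^{q/2}_x$ by Calder\'on--Zygmund), $D_\varepsilon$ inherits convergence. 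You do allude to ``passing to the limit in the mollified helicity balance'', which is the correct mechanism, but your write-up presents the direct H\"older bound as establishing the convergence of $D_\varepsilon$, and that step is wrong.
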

\begin{remark}
It seems that there exists a little gap between \eqref{1.20} and \eqref{helicityYaglom4/3law}.
\end{remark}
\begin{remark}
A well known helicity criterion for the incomressible Euler equations obtained in \cite{[CCFS]} is that the helicity is conserved provided that  $v\in L^3(0,T;B^{\f23}_{3,q^{\natural}})$ with $q^{\natural}<\infty.$ Hence, if $m\geq3$ in \eqref{the1.5c}, we require $n<9/4$ in this theorem. Since a special case of \eqref{the1.5c} is $p=m=3$, $q=\f92$ and $n=\f95$, the condition \eqref{the1.5c} in no empty.
\end{remark}

We would like to point out that the dissipation term \eqref{drKHMr} introduced by  Duchon-Robert in \cite{[DR]} not only can be applied to the deduction of the $4/3$ law and $4/5$ law, but also  may be used in the study of  the  Onsager's conjecture (see \cite{[DR]}). Hence, it seems that there are some potential applications of the above theorems in the research of conserved quantities in fluid mechanics. In particular, the study of conservation of energy, cross-helicity and helicity in  incompressible fluid has attracted a lot of attention (see e.g. \cite{[Chae],[Chae1],[CET],[CY],[CCFS],[WWY],[Eyink0],[EGSW],[DE],[WZ],[BGSTW],[CKS]}). We shall take Theorem \ref{the1.5} as an example to illustrate this application.
 \begin{coro}\label{coro1.6}
 We use the notations in Theorem \ref{the1.5}.
Assume that $v$ and $\omega$ satisfy
\be\label{1.23}
\ba
&\B(\int_{\mathbb{T}^{3}}|v(x+\ell,t)-v(x,t)|^{\f{9}{2}}dx\B)^{\f{2}{9}}\leq C(t)^{\f{1}{r_{1}}}|\ell|^{\alpha}\sigma^{\f13}(\ell),\\ &\B(\int_{\mathbb{T}^{3}}|\omega(x+\ell,t)-\omega(x,t)|^{\f95}dx\B)^{\f59}\leq C(t)^{\f{1}{r_{2}}}|\ell|^{\beta}\sigma^{\f13}(\ell),\\
&\text{with}\ \f{2}{r_{1}}+\f{1}{r_{2}}=1, 1<r_{1},r_{2}<\infty, 2\alpha+\beta\geq1,
\ea\ee
where both of $C_{i}(t)$ for $i=1,2$  are integrable functions on $[0,T]$, and $\sigma_{i}(\ell)$ for $i=1,2$  are both bounded functions on some neighborhood of the origin. Suppose that at least one of  $\sigma_{i}(\ell)$ obeys $\sigma_{i}(\ell) \rightarrow0$ as $\ell \rightarrow0$. Then the helicity is conserved.
 \end{coro}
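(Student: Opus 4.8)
The plan is to combine the local helicity balance furnished by Theorem \ref{the1.5} with the structure-function hypothesis \eqref{1.23} in order to show that the defect distribution $D(v,\omega)$ vanishes identically; integrating the local helicity equation in space then kills the divergence term by periodicity, and one obtains $\frac{d}{dt}\int_{\mathbb{T}^3} v\cdot\omega\,dx = 0$ in the sense of distributions on $(0,T)$, which is the assertion that helicity is conserved. So the entire matter reduces to the estimate $D(v,\omega)\equiv 0$, and in fact it suffices to show that the regularized defect
$$
D_\varepsilon(v,\omega) = -\f12\int_{\mathbb{T}^3}\nabla\varphi_\varepsilon(\ell)\cdot\delta v(\ell)\,|\delta\omega(\ell)\cdot\delta v(\ell)|\,d\ell + \f14\int_{\mathbb{T}^3}\nabla\varphi_\varepsilon(\ell)\cdot\delta\omega(\ell)\,|\delta v(\ell)|^2\,d\ell
$$
tends to zero, since Theorem \ref{the1.5} already guarantees the existence of the limit $D(v,\omega)$ in $\mathcal D'$.

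The first step is to bound the two integrals pointwise in time. Both are of the same schematic type $\int \nabla\varphi_\varepsilon(\ell)\cdot(\text{cubic increment in }v,\omega)\,d\ell$ with $|\nabla\varphi_\varepsilon(\ell)|\lesssim \varepsilon^{-4}$ supported on $|\ell|\lesssim\varepsilon$. For the second integral I would apply Hölder in $x$ with exponents $\tfrac{9}{2},\tfrac{9}{2},\tfrac{9}{5}$ (the third conjugate of two copies of $\tfrac{9}{2}$), using $\|\delta v(\ell)\|_{L^{9/2}}$ twice and $\|\delta\omega(\ell)\|_{L^{9/5}}$ once; for the first integral, Hölder with the same triple using $\|\delta v(\ell)\|_{L^{9/2}}$ once, $\|\delta v(\ell)\|_{L^{9/2}}$ once (from $\delta\omega\cdot\delta v$ one factor is $v$) — more precisely $\|\delta\omega\|_{L^{9/5}}\|\delta v\|_{L^{9/2}}^2$ again after one more Hölder. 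Feeding in \eqref{1.23}, the $\ell$-integrand is controlled by
$$
\varepsilon^{-4}\cdot C_1(t)^{2/r_1}C_2(t)^{1/r_2}\,|\ell|^{2\alpha+\beta}\,\sigma_1(\ell)^{2/3}\sigma_2(\ell)^{1/3},
$$
and integrating $d\ell$ over the ball of radius $\sim\varepsilon$ contributes $\varepsilon^{3+2\alpha+\beta}$, so that
$$
|D_\varepsilon(v,\omega)(t)| \lesssim \varepsilon^{\,2\alpha+\beta-1}\,C_1(t)^{2/r_1}C_2(t)^{1/r_2}\,\sup_{|\ell|\lesssim\varepsilon}\big(\sigma_1(\ell)^{2/3}\sigma_2(\ell)^{1/3}\big).
$$
Since $2\alpha+\beta\geq 1$ the power of $\varepsilon$ is $\geq 0$, so the $\varepsilon$-factor is at worst bounded, and the $\sigma$-supremum is bounded by hypothesis; hence $\{D_\varepsilon(v,\omega)(t)\}$ is uniformly bounded by the integrable function $t\mapsto C_1(t)^{2/r_1}C_2(t)^{1/r_2}$ (integrability follows from $\tfrac{2}{r_1}+\tfrac{1}{r_2}=1$ and Hölder on $[0,T]$ applied to the integrable $C_1,C_2$).

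The final step is to pass to the limit. If $2\alpha+\beta>1$ the prefactor $\varepsilon^{2\alpha+\beta-1}\to 0$ and we are done immediately. In the borderline case $2\alpha+\beta=1$ the prefactor is $1$, and here the hypothesis that at least one $\sigma_i(\ell)\to 0$ as $\ell\to 0$ enters: then $\sup_{|\ell|\lesssim\varepsilon}\sigma_1(\ell)^{2/3}\sigma_2(\ell)^{1/3}\to 0$ (the vanishing factor beats the merely-bounded one), so again $D_\varepsilon(v,\omega)(t)\to 0$ for a.e.\ $t$, with a uniform integrable majorant. By dominated convergence, $D_\varepsilon(v,\omega)\to 0$ in $L^1(0,T)$, hence in $\mathcal D'((0,T)\times\mathbb{T}^3)$; comparing with Theorem \ref{the1.5} gives $D(v,\omega)=0$, and the conservation of helicity follows as described. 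The main obstacle is purely bookkeeping: arranging the Hölder triple so that exactly two factors of $\delta v$ land in $L^{9/2}$ and one factor of $\delta\omega$ in $L^{9/5}$ in \emph{both} integrals (for the helicity-flux term $\delta v\cdot|\delta\omega\cdot\delta v|$ this needs one extra Cauchy–Schwarz on the inner product $\delta\omega\cdot\delta v$), and then checking that the resulting product of $C_i(t)$'s is integrable on $[0,T]$ — both routine once the exponents $\tfrac{2}{r_1}+\tfrac{1}{r_2}=1$ are matched with $\tfrac{2}{9}+\tfrac{2}{9}+\tfrac{5}{9}=1$.
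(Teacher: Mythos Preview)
Your proposal is correct and follows essentially the same route as the paper: bound $\int_{\mathbb{T}^3}|D_\varepsilon(v,\omega)|\,dx$ via H\"older in $x$ with the triple $(\tfrac{9}{2},\tfrac{9}{2},\tfrac{9}{5})$, insert the structure-function hypotheses \eqref{1.23}, change variables $\ell=\varepsilon\xi$ to extract the factor $\varepsilon^{2\alpha+\beta-1}$, and conclude. Your write-up is in fact more careful than the paper's terse version --- you explicitly separate the borderline case $2\alpha+\beta=1$ (where the vanishing of one $\sigma_i$ is needed) and you justify the time-integrability of $C_1(t)^{2/r_1}C_2(t)^{1/r_2}$ via H\"older on $[0,T]$, points the paper glosses over.
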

\begin{remark}
 This corollary is an improvement of corresponding results in \cite{[Chae]}.
\end{remark}
We turn our attention back to the Yaglom's   law.
Almost all above results are in good agreement with the known results derived from K\'arm\'an-Howarth type equations \eqref{KHMrelation}.
This
suggest that the Yaglom's   law of the fluid equations holds if an analogue of dissipation term as Duchon-Robert is obtained. It is helpful to discover the new Yaglom's law in  turbulence.  Next, we establish     the new Yaglom's law in the   Oldroyd-B model
and for the subgrid scale $\alpha$-models of turbulence.
Indeed, we are conserved with four-three relation of
the following simplified version of inviscid Oldroyd-B equations introduced by  Elgindi and   Rousset in \cite{[ER]}
\be\left\{\ba\label{Oldroyd-B3}
&v_{t}  +v\cdot\nabla v +\nabla \pi=  \Div\tau, \\
&\tau_{t} + v\cdot\nabla \tau + \tau =  \mathcal{D}(v)=\f12(\nabla v+\nabla v^{\text{T}}),\\
&\Div v=0.
\ea\right.\ee
Here $v(t,x)$ stands for the velocity, $\pi(t,x)$ is the pressure and $\tau(t,x)$ is
 the non-Newtonian part of the stress tensor which is a $(d,d)$ symmetric matrix.
 The Oldroyd-B model describes the evolution of some viscoelastic flows(see e.g.\cite{[ER],WZ,[LLZ],[ZYW]}).
 \begin{theorem}\label{the1.6}
 Let  the pair
 $(v,\tau)\in L^{\infty}(0,T;L^{2}(\mathbb{T}^{3}))\cap L^{3}(0,T;L^{3}(\mathbb{T}^{3}))$ be a weak solution of  simple inviscid Oldroyd-B equations \eqref{Oldroyd-B3}.
   Then the function
 $$D_{\varepsilon}(v,\tau)=-\f14\int_{\mathbb{T}^{3}}\nabla\varphi_{\varepsilon}(\ell)\cdot\delta v(\ell)|\delta  v(\ell)|^{2}d\ell-\f14\int_{\mathbb{T}^{3}}\nabla\varphi_{\varepsilon}(\ell)\cdot\delta v(\ell)|\delta \tau(\ell)|^{2}d\ell$$ converges to a distribution $D(v,\tau)$ in the sense of distributions as $\varepsilon\rightarrow0$, and $D(v,\tau)$ satisfies the local equation of energy
 $$ \partial_{t}(\f{|v|^{2}+|\tau|^{2}}{2})  +\text{div}\B[\f{|v|^{2}+|\tau|^{2}}{2} v+v\pi\B]-\partial_{i}(\tau_{ij}v_{j})+|\tau|^{2}=D( v,\tau),
$$
in the sense of distributions on $(0,T)\times \mathbb{T}^3$.
Moreover, there holds
\be\label{DRYaglom4/3lawOldroyd-B}
S(v,v,v)+S(v,\tau,\tau)=-\f43D( v, \tau),
\ee
where $S(v,v,v)=-\lim\limits_{\lambda\rightarrow0}S (v,v,v;\lambda)=-
\lim\limits_{\lambda\rightarrow0}\f{1}{\lambda}\int_{\partial B }\ell\cdot\delta v (\lambda\ell)|\delta v(\lambda\ell)|^{2}\f{d\sigma(\ell) }{4\pi}$ and
$S(v,\tau,\tau)=-\lim\limits_{\lambda\rightarrow0}S (v,\tau,\tau;\lambda)=-
\lim\limits_{\lambda\rightarrow0}\f{1}{\lambda}\int_{\partial B }\ell\cdot\delta v (\lambda\ell)|\delta \tau(\lambda\ell)|^{2}\f{d\sigma(\ell) }{4\pi}$.
 \end{theorem}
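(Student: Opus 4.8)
The plan is to follow the Duchon--Robert/Eyink scheme, mollifying the solution and carefully tracking the quadratic-in-$\delta$ commutator terms generated by the two nonlinear structures in \eqref{Oldroyd-B3}: the transport term $v\cdot\nabla v$ acting on $\tfrac12|v|^2$ (which is the classical Euler case treated in \cite{[DR]}), and the transport term $v\cdot\nabla\tau$ together with the coupling $\Div\tau$ and $\mathcal D(v)$ acting on $\tfrac12|\tau|^2$. First I would fix a standard mollifier $\varphi_\varepsilon$, write $f^\varepsilon=f*\varphi_\varepsilon$, and multiply the mollified $v$-equation by $v^\varepsilon$ and the mollified $\tau$-equation by $\tau^\varepsilon$. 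For the velocity part one reproduces verbatim the Duchon--Robert identity
$$
\partial_t\big(\tfrac12|v^\varepsilon|^2\big)+\Div\big(\tfrac12|v^\varepsilon|^2 v^\varepsilon+v^\varepsilon\pi^\varepsilon\big)
-\partial_i\big((\Div\tau)^\varepsilon_{j}v^\varepsilon_j\big)+\text{(coupling)}=D^1_\varepsilon(v)+r^1_\varepsilon,
$$
where $D^1_\varepsilon(v)=-\tfrac14\int\nabla\varphi_\varepsilon(\ell)\cdot\delta v(\ell)|\delta v(\ell)|^2\,d\ell$ is the familiar Euler dissipation and $r^1_\varepsilon\to0$ in $\mathcal D'$ using $v\in L^3_tL^3_x$ together with the elementary commutator lemma (the algebraic identity expressing $(v\cdot\nabla v)^\varepsilon\cdot v^\varepsilon-v^\varepsilon\cdot\nabla(\tfrac12|v^\varepsilon|^2)$ in terms of increments; see \cite{[DR],[CET]}). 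The genuinely new bookkeeping is for the $\tau$-equation: multiplying $\tau^\varepsilon_t+(v\cdot\nabla\tau)^\varepsilon+\tau^\varepsilon=\mathcal D(v)^\varepsilon$ by $\tau^\varepsilon$ gives $\partial_t(\tfrac12|\tau^\varepsilon|^2)+\Div(\tfrac12|\tau^\varepsilon|^2 v^\varepsilon)+|\tau^\varepsilon|^2=\mathcal D(v)^\varepsilon:\tau^\varepsilon+D^2_\varepsilon(v,\tau)+r^2_\varepsilon$, where $D^2_\varepsilon(v,\tau)=-\tfrac14\int\nabla\varphi_\varepsilon(\ell)\cdot\delta v(\ell)|\delta\tau(\ell)|^2\,d\ell$ — the same transport-commutator structure but with $\tau$ in place of $v$ — and $r^2_\varepsilon\to0$ in $\mathcal D'$ because $v\in L^3_tL^3_x$ and $\tau\in L^3_tL^3_x$ let Hölder close the increment estimates exactly as in the scalar temperature case of Theorem \ref{the1.1}.

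Next I would handle the linear/coupling terms, which do not produce any dissipation. The term $\partial_i(\tau_{ij}v_j)$ is linear in each variable, so $(\Div\tau)^\varepsilon\cdot v^\varepsilon\to(\Div\tau)\cdot v$ and $\mathcal D(v)^\varepsilon:\tau^\varepsilon\to\mathcal D(v):\tau$ in $\mathcal D'$ with no residual commutator; integrating by parts and using $\Div v=0$, $\mathcal D(v):\tau=\tfrac12(\nabla v+\nabla v^{\mathrm T}):\tau=\nabla v:\tau=\partial_i(\tau_{ij}v_j)-(\Div\tau)_j v_j$ (by symmetry of $\tau$), so the coupling contributions combine into the single divergence $-\partial_i(\tau_{ij}v_j)$ appearing in the stated local energy equation; the damping term $|\tau^\varepsilon|^2\to|\tau|^2$ strongly in $L^{3/2}_{t,x}$. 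Adding the two identities and letting $\varepsilon\to0$ yields $D(v,\tau):=\lim_\varepsilon\big(D^1_\varepsilon(v)+D^2_\varepsilon(v,\tau)\big)=\lim_\varepsilon D_\varepsilon(v,\tau)$ in $\mathcal D'$, and the local energy balance
$$
\partial_t\Big(\tfrac{|v|^2+|\tau|^2}{2}\Big)+\Div\Big[\tfrac{|v|^2+|\tau|^2}{2}v+v\pi\Big]-\partial_i(\tau_{ij}v_j)+|\tau|^2=D(v,\tau)
$$
holds in $\mathcal D'$, as claimed. Existence of the limit (as opposed to merely of $\liminf$) follows because each $D^j_\varepsilon$ is shown to converge individually.

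For the $4/3$ law \eqref{DRYaglom4/3lawOldroyd-B} I would argue exactly as in \cite{[DR]} and as in the proof of Theorem \ref{the1.2}: rewrite $D_\varepsilon(v,\tau)$ by changing variables $\ell=\lambda r$ with $r\in\partial B$, so that $D_\varepsilon=-\tfrac14\int_0^\infty\big(\int_{\partial B}\nabla\varphi(\lambda r/\varepsilon)\cdot\delta v(\lambda r)\,[|\delta v(\lambda r)|^2+|\delta\tau(\lambda r)|^2]\,d\sigma(r)\big)\lambda^2\,d\lambda/\varepsilon^3$; writing $\nabla\varphi_\varepsilon=\varepsilon^{-4}(\nabla\varphi)(\cdot/\varepsilon)$ and using that $\varphi$ is radializable up to mollification (or, more cleanly, testing against a radial $\varphi$ and integrating the radial derivative), one identifies the leading behaviour as $\lambda\to0$ of the spherical averages $\tfrac1\lambda\int_{\partial B}r\cdot\delta v(\lambda r)|\delta v(\lambda r)|^2\,\tfrac{d\sigma}{4\pi}$ and $\tfrac1\lambda\int_{\partial B}r\cdot\delta v(\lambda r)|\delta\tau(\lambda r)|^2\,\tfrac{d\sigma}{4\pi}$. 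The $\tfrac14$-vs-$\tfrac43$ constant comes out of $\int_0^\infty\varphi'(s)s^3\,ds=-3\int_0^\infty\varphi(s)s^2\,ds$ against the normalization $4\pi\int_0^\infty\varphi(s)s^2\,ds=1$ — i.e.\ exactly the computation in \cite{[DR]}, done once for the $|\delta v|^2$ piece and once for the $|\delta\tau|^2$ piece — giving $D(v,\tau)=-\tfrac34\big(S(v,v,v)+S(v,\tau,\tau)\big)$, equivalently \eqref{DRYaglom4/3lawOldroyd-B}. The main obstacle is bookkeeping rather than any new estimate: one must verify that the extra linear terms $\Div\tau$, $\mathcal D(v)$ and the damping $\tau$ contribute \emph{no} increment-quadratic commutator (so they cannot feed into the limiting $D$), and that the symmetry of $\tau$ is used correctly to fold the coupling into the single divergence $-\partial_i(\tau_{ij}v_j)$; the convergence $r^j_\varepsilon\to0$ and the spherical-average extraction are then routine once the integrability $(v,\tau)\in L^3_tL^3_x$ is in hand, by Hölder in space and dominated convergence in time.
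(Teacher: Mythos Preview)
Your proposal is correct and follows essentially the same Duchon--Robert strategy as the paper's proof: mollify, identify the two transport commutators with the increment integrals $D^1_\varepsilon$ and $D^2_\varepsilon$, use the symmetry of $\tau$ to collapse the coupling $\Div\tau\cdot v+\mathcal D(v):\tau$ into the single divergence $\partial_i(\tau_{ij}v_j)$, and then run the polar-coordinate computation $\int_0^\infty r^3\varphi'(r)\,dr=-3/(4\pi)$ for the $4/3$ constant. Two small points to tighten: to obtain the \emph{specific} increment form $D_\varepsilon=-\tfrac14\int\nabla\varphi_\varepsilon\cdot\delta v(|\delta v|^2+|\delta\tau|^2)\,d\ell$ you need the symmetrized pairing $v\cdot v^\varepsilon$, $\tau:\tau^\varepsilon$ (multiply the mollified equation by the original field and the original equation by the mollified field, then add), not $|v^\varepsilon|^2$, $|\tau^\varepsilon|^2$ as you wrote; and the paper makes explicit the pressure bound $\pi\in L^{3/2}_{t,x}$ via Calder\'on--Zygmund applied to $-\Delta\pi=\partial_i\partial_j(v_iv_j)-\partial_i\partial_j\tau_{ij}$, which you should include since the $\tau$-contribution to the pressure is a new feature of the Oldroyd-B case.
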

 \begin{remark}
 This is the first result involving the Yaglom's law to the Oldroyd-B models.
It is interesting to obtain   four-three law  as  \eqref{DRYaglom4/3lawOldroyd-B} in the general Oldroyd-B model.
Along the same line of \eqref{coro1.6}, one can invoke the dissipation term to consider the   energy conservation for the weak solutions in the Oldroyd-B equations  \eqref{Oldroyd-B3}.
 \end{remark}

Very recently, Boutros and  Titi \cite{[BT]}  studied the   Onsager's conjecture for subgrid scale $\alpha$-models of
turbulence including Leray-$\alpha$ model \eqref{Laerfa},  Euler-$\alpha$ model \eqref{Euleraerfa}, modified Leray-$\alpha$ model \eqref{mLaerfa},  Clark-$\alpha$ model
\eqref{Caerfa} and   Leray-$\alpha$ MHD equations
 \eqref{LMHD}. Notice that all the proofs in   \cite{[BT]} rely on the corresponding dissipation term
as Duchon-Robert \eqref{drKHMr}. As  aforementioned, in the spirt of Theorem  \ref{the1.1}-\ref{the1.4}, we  immediately get the following
 Yaglom's relations for subgrid scale $\alpha$-models.
 \begin{theorem}\label{the1.7}
 We assume that $v_{k}=(1-\alpha^{2}\Delta) u_{k}$ and $\partial_{i}u_{i}=\partial_{j}v_{j}$ for $i,j,k\in\{1,2,3\}.$\\
 (1) Let $$S(u,v,v)=-\lim\limits_{\lambda\rightarrow0}S (u,v,v;\lambda)=-
\lim\limits_{\lambda\rightarrow0}\f{1}{\lambda}\int_{\partial B }\ell\cdot\delta u (\lambda\ell)|\delta v(\lambda\ell)|^{2}\f{d\sigma(\ell) }{4\pi}$$ and $D_{1}(u,v)$ satisfy the local equation of energy
\be\label{Laerfa} \partial_{t}(\f{|v|^{2} }{2})  +\text{div}\B[\f{|v|^{2} }{2} v+v\pi\B]=D_{1}( u,v),\ee
 in the sense of distributions  for the weak solutions $v$ of Leray-$\alpha$ model $${v_{j}}_{t}+ \partial_{i}(u_{i}v_{j})+\partial_{j}\Pi =0.$$
Moreover, there holds
$$S(u,v,v)=-\f43 D_{1}( u,v).$$
(2) Let
$$\ba
&S_{1}(u,u,u)=-\lim\limits_{\lambda\rightarrow0}S (u,u,u;\lambda)=-
\lim\limits_{\lambda\rightarrow0}\f{1}{\lambda}\int_{\partial B }\ell\cdot\delta u (\lambda\ell)|\delta u(\lambda\ell)|^{2}\f{d\sigma(\ell) }{4\pi},\\
&S_{2}(u,\partial_{k}u_{j},\partial_{k}u_{j})=-\lim\limits_{\lambda\rightarrow0}S (u,\partial_{k}u_{j},\partial_{k}u_{j};\lambda)=-
\lim\limits_{\lambda\rightarrow0}\f{1}{\lambda}\int_{\partial B }\ell\cdot\delta u (\lambda\ell)|\delta \partial_{k}u_{j}(\lambda\ell)|^{2}\f{d\sigma(\ell) }{4\pi},
\ea
$$ and $D_{2}(u)$ satisfy the local equation of energy
$$\ba &\partial_{t}(\f{|u|^{2} }{2})+\alpha^{2}\partial_{t}(\f{|\nabla v|^{2} }{2}) -\alpha^{2}\partial_{t}\partial_{i}(u_{j}\partial_{i}u_{j}) +\alpha^{2}\partial_{i}(\partial_{t}u_{j}\partial_{i}u_{j})+\alpha^{2}
\text{div}(\partial_{k}u_{j}u_{j}\partial_{k}u)\\&+\alpha^{2}
\text{div}(\partial_{k}u_{j}\partial_{k}u_{j}u)+\alpha^{2}
\partial_{i}\partial_{k}(u_{i}\partial_{k}u_{j}u_{j})+\text{div}\B[\f{|u|^{2} }{2} u+u\pi\B]=D_{2}(u),\ea$$
 in the sense of distributions  for the weak solutions $u$ of Euler-$\alpha$ model
\be\label{Euleraerfa}
{v_{j}}_{t}+  \partial_{i}(u_{i}v_{j})+v_{i}\partial_{j}u_{i}+\partial_{j}\Pi =0.\ee
Moreover, we have
$$S_{1}(u,u,u)+2\alpha^{2}S_{2}(u,\partial_{k}u_{j},\partial_{k}u_{j})=-\f43 D_{2}( u ).$$
(3) Let
$$\ba
&S_{1}(u,u,u)=-\lim\limits_{\lambda\rightarrow0}S (u,u,u;\lambda)=-
\lim\limits_{\lambda\rightarrow0}\f{1}{\lambda}\int_{\partial B }\ell\cdot\delta u (\lambda\ell)|\delta u(\lambda\ell)|^{2}\f{d\sigma(\ell) }{4\pi},\\
&S_{2}(u,\partial_{k}u_{i},\partial_{k}u_{j})=-\lim\limits_{\lambda\rightarrow0}S (u,\partial_{k}u_{i},\partial_{k}u_{j};\lambda)=-
\lim\limits_{\lambda\rightarrow0}\f{1}{\lambda}\int_{\partial B }\ell_{i} \delta u_{j} (\lambda\ell) (\delta \partial_{k}u_{i}(\lambda\ell) \delta \partial_{k}u_{j}(\lambda\ell) ) \f{d\sigma(\ell) }{4\pi},
\ea
$$ and $D_{3}(u)$ satisfy the local equation of energy
$$\ba &\partial_{t}(\f{|u|^{2} }{2})+\alpha^{2}\partial_{t}(\f{|\nabla v|^{2} }{2}) -\alpha^{2}\partial_{t}\partial_{i}(u_{j}\partial_{i}u_{j}) +\alpha^{2}\partial_{i}(\partial_{t}u_{j}\partial_{i}u_{j})+\alpha^{2}
\text{div}(\partial_{k}u_{j}u_{j}\partial_{k}u),\\&+\f{\alpha^{2}}{2}
\text{div}(\partial_{k}|u|^{2}\partial_{k} u)+\text{div}\B[\f{|u|^{2} }{2} u+u\pi\B]=D_{3}(u),\ea$$
in the sense of distributions  for the weak solutions $u$ of modified Leray-$\alpha$ model \be\label{mLaerfa}
{v_{j}}_{t}+  \partial_{i}(v_{i} u_{j}) +\partial_{j}\Pi =0.\ee
Moreover, there holds
 \be\label{mLaerfalaw}
 S_{1}(u,u,u)+2\alpha^{2}S_{2}(u,\partial_{k}u_{j},\partial_{k}u_{j})=-\f43 D_{3}( u).\ee

(4)Let
$$\ba
&S_{1}(u,u,u)=-\lim\limits_{\lambda\rightarrow0}S (u,u,u;\lambda)=-
\lim\limits_{\lambda\rightarrow0}\f{1}{\lambda}\int_{\partial B }\ell\cdot\delta u (\lambda\ell)|\delta u(\lambda\ell)|^{2}\f{d\sigma(\ell) }{4\pi},\\
&S_{2}(u,\partial_{k}u_{i},\partial_{k}u_{j})=-\lim\limits_{\lambda\rightarrow0}S (u,\partial_{k}u_{i},\partial_{k}u_{j};\lambda)=-
\lim\limits_{\lambda\rightarrow0}\f{1}{\lambda}\int_{\partial B }\ell_{i} \delta u_{j} (\lambda\ell) (\delta \partial_{k}u_{i}(\lambda\ell) \delta \partial_{k}u_{j}(\lambda\ell) ) \f{d\sigma(\ell) }{4\pi},\\&S_{3}(u,\partial_{k}u_{j},\partial_{k}u_{j})=-\lim\limits_{\lambda\rightarrow0}S (u,\partial_{k}u_{j},\partial_{k}u_{j};\lambda)=-
\lim\limits_{\lambda\rightarrow0}\f{1}{\lambda}\int_{\partial B }\ell\cdot\delta u  (\lambda\ell) | \delta \partial_{k}u_{j}(\lambda\ell) |^{2} \f{d\sigma(\ell) }{4\pi},
\ea
$$ and $D_{4}(u)$ satisfies the local equation of energy
$$\ba &\partial_{t}(\f{|u|^{2} }{2})+\alpha^{2}\partial_{t}(\f{|\nabla v|^{2} }{2}) -\alpha^{2}\partial_{t}\partial_{i}(u_{j}\partial_{i}u_{j}) +\alpha^{2}\partial_{i}(\partial_{t}u_{j}\partial_{i}u_{j})+\alpha^{2}
\partial_{i}(u_{j}\partial_{k}u_{j}\partial_{k}u_{i})\\&+\f{\alpha^{2}}{2}
 \partial_{i}\partial_{k}(u_{k}\partial_{k}u_{i}u_{j})
 +\f{3\alpha^{2}}{2}\partial_{k}(\partial_{i}u_{j}\partial_{k}u_{i}u_{j})+\text{div}\B[\f{|u|^{2} }{2} u+u\pi\B]=D_{4}(u).\ea$$
in the sense of distributions  for the weak solutions $u$ of the Clark-$\alpha$ model
\be\label{Caerfa}
{v_{j}}_{t}+  \partial_{i}(u_{i} v_{j})+  \partial_{i}(u_{j} v_{i})-\partial_{i}(u_{i} u_{j})
-\alpha^{2}\partial_{i}[\partial_{k}u_{i}\partial_{k}u_{i}] +\partial_{j}\Pi =0.\ee
Moreover, it is valid that
$$S_{1}(u,u,u)+2\alpha^{2}S_{2}(u,\partial_{k}u_{j},\partial_{k}u_{j})+\alpha^{2}S_{3}=-\f43 D_{4}( u,v).$$
(5) Let
$$\ba
&S_{1}(u,v,v)=-\lim\limits_{\lambda\rightarrow0}S_{1} ( u,v,v,\lambda)=-
\lim\limits_{\lambda\rightarrow0}\f{1}{\lambda}\int_{\partial B } \ell \cdot\delta u (\lambda\ell)|\delta v(\lambda\ell)|^{2}\f{d\sigma(\ell) }{4\pi},\\
&S_{2}(u,H,H)=-\lim\limits_{\lambda\rightarrow0}S_{2} ( u,H,H,\lambda)=-
\lim\limits_{\lambda\rightarrow0}\f{1}{\lambda}\int_{\partial B } \ell \cdot\delta u (\lambda\ell)|\delta H(\lambda\ell)|^{2}\f{d\sigma(\ell) }{4\pi},
\\
&S_{3}(H,v,H)=-\lim\limits_{\lambda\rightarrow0}S _{3}( H,v,H,\lambda)=-
\lim\limits_{\lambda\rightarrow0}\f{1}{\lambda}\int_{\partial B } \ell \cdot\delta H (\lambda\ell)|\delta v(\lambda\ell)\cdot\delta H(\ell)| \f{d\sigma(\ell) }{4\pi},
\ea
$$ and $D_{5}( v,u,H )$ satisfy the local equation of energy
$$\ba
 \partial_{t}(\f{v^{2}+H^{2}}{2}  )  +\text{div}\B[( \f12(|v|^{2}+|H |^{2})u+v\Pi)-H(H\cdot v)\B]=D_{5}( v,u,H ),
\ea$$
 in the sense of distributions  for the weak solutions $v,H$ of the   Leray-$\alpha$ MHD equations
 \be\left\{\ba\label{LMHD}
&v_{t}+u\cdot\nabla v-H\cdot\nabla H+\nabla\Pi =0, \\
&H_{t}+u\cdot\nabla H-H\cdot\nabla v =0, \\
&\Div v=\Div H=0.
 \ea\right.\ee
Moreover, we have
$$S_{1}(v,v,v)+S_{2}(v,H,H)-2 S_{3}(H, v,H )=-\f43D_{5}( v,u,H ).$$
  \end{theorem}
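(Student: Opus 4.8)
The plan is to treat all five subgrid scale $\alpha$-models by the same Duchon–Robert–Eyink scheme already deployed in the proofs of Theorems \ref{the1.1}--\ref{the1.4}, exploiting the fact that each model is, after the substitution $v_k=(1-\alpha^2\Delta)u_k$, a transport-type equation in divergence form whose nonlinearity is a (finite) sum of bilinear terms. First I would fix a standard mollifier $\varphi_\varepsilon$ and write, for each model, the mollified equation for the relevant conserved density (kinetic energy $\tfrac12|v|^2$ for Leray-$\alpha$, the full $H^1$-type energy $\tfrac12|u|^2+\tfrac{\alpha^2}{2}|\nabla v|^2$ for Euler-$\alpha$, modified Leray-$\alpha$ and Clark-$\alpha$, and the total MHD energy $\tfrac12(v^2+H^2)$ for Leray-$\alpha$ MHD). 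Multiplying the mollified momentum equation by the mollified density variable and regrouping the commutator terms exactly as in Duchon–Robert, one isolates a remainder $D_\varepsilon(\cdot)$ which is precisely the functional displayed in the statement; the local energy balance (the displayed PDE in each item) then holds in $\mathcal D'((0,T)\times\mathbb T^3)$ once one shows $D_\varepsilon\to D$ distributionally.

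The convergence $D_\varepsilon\to D$ is where the regularity hypothesis $u,v\in L^\infty(0,T;L^2)\cap L^3(0,T;L^3)$ (together with $v=(1-\alpha^2\Delta)u$, which gives $u\in L^3_tW^{2,3}_x$ and hence $\nabla u,\ \partial_k\partial_j u\in L^3_tL^3_x$ as well) is used: every term in each $D_\varepsilon$ is a trilinear expression $\int \nabla\varphi_\varepsilon(\ell)\cdot\delta a(\ell)\,\delta b(\ell)\,\delta c(\ell)\,d\ell$ (up to contractions) with $a,b,c$ among $u,\nabla u,\nabla^2 u,v,H$, all lying in $L^3_tL^3_x$, so the classical Constantin–E–Titi / Duchon–Robert commutator estimate applies verbatim and produces a well-defined limit $D$ independent of $\varphi$. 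Next I would pass from the mollifier representation to the sphere-average (Eyink) representation: rewriting $\nabla\varphi_\varepsilon(\ell)=\varepsilon^{-4}(\nabla\varphi)(\ell/\varepsilon)$, changing variables $\ell=\varepsilon y$, and using that $\int_{\mathbb R^3}\nabla\varphi\otimes y\,dy=-\mathrm{Id}$ (a consequence of $\int\varphi=1$), the leading-order term of $D_\varepsilon$ is exactly $-\tfrac34$ of the spherical average $S(\cdot;\lambda)$ as $\lambda\to0$; this is the same computation that converts \eqref{drKHMr} into \eqref{DR4/3law}, and it yields the claimed $4/3$ laws $S(u,v,v)=-\tfrac43 D_1(u,v)$, etc.

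The main obstacle is \emph{bookkeeping rather than analysis}: for Euler-$\alpha$, modified Leray-$\alpha$ and Clark-$\alpha$ the conserved energy contains the higher-order piece $\tfrac{\alpha^2}{2}|\nabla v|^2$ and its time derivative generates several extra divergence terms and, crucially, extra commutator contributions — these are the terms $\alpha^2\,\mathrm{div}(\partial_k u_j\,u_j\,\partial_k u)$, $\alpha^2\,\mathrm{div}(\partial_k u_j\,\partial_k u_j\,u)$, etc., appearing in the displayed balances — and one must check that each of them either is an exact space-time divergence (hence drops out of the distributional identity against test functions, or is carried along explicitly as written) or contributes a genuine piece to $D_i$ of the trilinear form above. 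Getting the precise coefficients ($2\alpha^2$ in front of $S_2$, the extra $\alpha^2 S_3$ for Clark-$\alpha$) requires care in tracking which quadratic combinations $\delta(\partial_k u_i)\,\delta(\partial_k u_j)$ survive the symmetrization. For the Leray-$\alpha$ MHD system the structure is identical to Theorem \ref{the1.2}/\ref{the1.4} with the advective velocity $v$ replaced by the smoother $u$, so no new phenomenon arises there. Since all the heavy lifting — the commutator estimate and the $\nabla\varphi\mapsto$ sphere-average reduction — has already been carried out in the proofs of the earlier theorems, I would present this proof briskly, indicating for each model only the energy identity obtained from the mollified equations and then invoking the established machinery, and I would do Clark-$\alpha$ in full detail as the representative worst case.
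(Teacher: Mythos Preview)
Your plan is sound but considerably more elaborate than what the paper actually does. In the paper, the local energy balances and the explicit Duchon--Robert dissipation terms $D_i$ for each $\alpha$-model are \emph{not} derived: they are quoted directly from Boutros--Titi \cite{[BT]} (this is why the theorem statement is phrased ``let $D_i$ satisfy the local equation of energy \ldots'' rather than ``then $D_i$ satisfies \ldots'', and why the Conclusion contrasts Theorem~\ref{the1.7} with the self-contained Theorem~\ref{the1.8}). The only genuine work in the paper's proof is the polar-coordinate reduction already carried out in \eqref{key2}--\eqref{key3}: once one knows from \cite{[BT]} that, e.g., $D_1(u,v)=\lim_{\varepsilon\to0}\bigl(-\tfrac14\int\nabla\varphi_\varepsilon(\ell)\cdot\delta u(\ell)\,|\delta v(\ell)|^2\,d\ell\bigr)$, the identity $D_1=-\tfrac34 S(u,v,v)$ is a one-line repetition of \eqref{key2}--\eqref{key3}, and the remaining cases (2)--(5) follow by the same substitution applied term-by-term to the $D_i$ given in \cite{[BT]}.

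Your approach---re-deriving the mollified energy balances, isolating the commutators, and proving convergence under an $L^3_tL^3_x$ hypothesis---would of course also work and has the advantage of being self-contained (indeed this is exactly what the paper does for the cross-helicity case in Theorem~\ref{the1.8}). But for Theorem~\ref{the1.7} it is unnecessary: the ``bookkeeping'' you flag as the main obstacle (tracking the $\alpha^2$-coefficients for Euler-$\alpha$, modified Leray-$\alpha$, Clark-$\alpha$) has already been done in \cite{[BT]}, and the paper simply imports it. So you may shorten your proof dramatically by citing \cite{[BT]} for each $D_i$ and then invoking \eqref{key2}--\eqref{key3}; if you prefer a self-contained treatment, your outline is correct, though note that the regularity assumptions you impose are not part of the theorem statement and should be reconciled with those actually used in \cite{[BT]}.
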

\begin{remark}
Though the K\'arm\'an-Howarth equation of the  Lagrangian averaged Euler equations  \eqref{Euleraerfa} and its $2/15$ law  were  obtained  by Holm in \cite{[Holm]},   it seems that even  for this system the $3/4$ law \eqref{mLaerfalaw}  does not exist in the known literature. To the knowledge of the authors, all the results in Theorem \ref{the1.7} are novel.
   \end{remark}  \begin{remark}
When $\alpha=0$,  most the $4/3$ laws in this theorem reduce  to  \eqref{DR4/3law}
  \end{remark}
 Notice that there exist two $3/4$ laws  \eqref{1.16} for the  energy and cross-helicity in the  standard MHD equations. A natural equation is whether the Yaglom's law  for the cross-helicity  of the Leray-$\alpha$ MHD equations is still valid. The last theorem is devoted to this.
\begin{theorem}\label{the1.8}
Suppose that $(v, H )\in L^{\infty}(0,T;L^{2}(\mathbb{T}^{3}))\cap \in L^{3}(0,T;L^{3}(\mathbb{T}^{3}))$   be a weak solution of the  \eqref{LMHD}.
 Here the scalar function $\Pi=\pi+\f{1}{2}b^{2}$ represents the total pressure.
 Let    $$  D_{C}( v,u,H,\varepsilon)=-\f12\int_{\mathbb{T}^{3}}\nabla\varphi_{\varepsilon}(\ell)\cdot\delta u(\ell) |\delta  v(\ell)\delta  H(\ell)| d\ell+\f14\int_{\mathbb{T}^{3}}\nabla\varphi_{\varepsilon}(\ell)\cdot\delta H(\ell)(| \delta  v(\ell)|^{2}|+| \delta  H(\ell)|^{2}|) d\ell,$$  and
    $$\ba
& S_{4}(u,v,H)=-\lim\limits_{\lambda\rightarrow0}S _{4}( u,v,H,\lambda)=-
\lim\limits_{\lambda\rightarrow0}\f{1}{\lambda}\int_{\partial B } \ell \cdot\delta u (\lambda\ell)|\delta v(\lambda\ell)\cdot\delta H(\lambda\ell)| \f{d\sigma(\ell) }{4\pi},\\
&S_{5}(H,v,v)=-\lim\limits_{\lambda\rightarrow0}S_{5} ( H,v,v;\lambda)=-
\lim\limits_{\lambda\rightarrow0}\f{1}{\lambda}\int_{\partial B } \ell \cdot\delta H (\lambda\ell)|\delta v(\lambda\ell)|^{2}\f{d\sigma(\ell) }{4\pi},\\
&S_{6}(H,H,H)=-\lim\limits_{\lambda\rightarrow0}S_{6} ( H,H,H,\lambda)=-
\lim\limits_{\lambda\rightarrow0}\f{1}{\lambda}\int_{\partial B } \ell \cdot\delta H (\lambda\ell)|\delta H(\lambda\ell)|^{2}\f{d\sigma(\ell) }{4\pi}.\ea$$
  Then the functions   $D_{C}( v,H,\varepsilon) $ converge to    $D_{C}( v,H)$   in the sense of distributions as $\varepsilon\rightarrow0$, and     $D_{C}( v,H)$ satisfy the local equation
\be \ba\label{1.29}
  &\partial_{t}(  v\cdot H)  +\text{div}\B[u(v\cdot H)-\f12(|H|^{2})H-\f12Hv^{2}+H\Pi\B]= D_{C}( v,u,H ),
\ea\ee
in the sense of distributions.
Moreover, there holds
$$2S_{4}( u,v,H )-S_{5}(H,v,v )- S_{6}(H,H,H )=-\f43D_{C}( v,u,H ).$$
 \end{theorem}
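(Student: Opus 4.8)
\emph{Proof sketch.} The plan is to run the Duchon--Robert mollification argument for the cross-helicity balance exactly as in the proof of Theorem~\ref{the1.4} (equivalently, of Theorem~\ref{the1.2}) for the ordinary MHD system, the sole structural change being that the transporting velocity $v$ is everywhere replaced by the filtered field $u=(1-\alpha^{2}\Delta)^{-1}v$. Two preliminary remarks make this painless. First, $u$ is in fact \emph{smoother} than in the unfiltered case: from $v\in L^{\infty}(0,T;L^{2})\cap L^{3}(0,T;L^{3})$ and $v=(1-\alpha^{2}\Delta)u$, elliptic regularity yields $u\in L^{\infty}(0,T;H^{2})\cap L^{3}(0,T;W^{2,3})\hookrightarrow L^{\infty}((0,T)\times\mathbb{T}^{3})$, since $H^{2}(\mathbb{T}^{3})$ and $W^{2,3}(\mathbb{T}^{3})$ embed into $C(\mathbb{T}^{3})$. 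Second, taking the divergence of the $v$-equation of \eqref{LMHD} gives $-\Delta\Pi=\partial_{i}\partial_{j}(u_{i}v_{j})-\partial_{i}\partial_{j}(H_{i}H_{j})$, so by Calder\'on--Zygmund theory $\Pi\in L^{3}(0,T;L^{3})+L^{3/2}(0,T;L^{3/2})\subset L^{3/2}(0,T;L^{3/2})$. Consequently every cubic product formed from $u,v,H,\Pi$ that appears below belongs to $L^{1}((0,T)\times\mathbb{T}^{3})$, which is precisely what the limiting procedure requires.

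Next I would mollify the $v$- and $H$-equations of \eqref{LMHD} in space, write $f^{\varepsilon}=f\ast\varphi_{\varepsilon}$, pair the mollified $v$-equation with $H^{\varepsilon}$ and the mollified $H$-equation with $v^{\varepsilon}$, and add, so that $\partial_{t}(v^{\varepsilon}\cdot H^{\varepsilon})$ equals $-(u\cdot\nabla v)^{\varepsilon}\cdot H^{\varepsilon}-v^{\varepsilon}\cdot(u\cdot\nabla H)^{\varepsilon}+(H\cdot\nabla H)^{\varepsilon}\cdot H^{\varepsilon}+v^{\varepsilon}\cdot(H\cdot\nabla v)^{\varepsilon}-\nabla\Pi^{\varepsilon}\cdot H^{\varepsilon}$. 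Since $v$ and $H$ are divergence free the pressure contribution is the exact flux $-\text{div}(\Pi^{\varepsilon}H^{\varepsilon})$. Each advective block — the $u$-transport of $v\cdot H$, and the $H$-transport of $\tfrac12|v|^{2}$ and of $\tfrac12|H|^{2}$ — I would recast by the Duchon--Robert commutator identities of \cite{[DR]} (as used in the proofs of Theorems~\ref{the1.2}--\ref{the1.4}): each block becomes the divergence of an explicit flux (respectively $u(v^{\varepsilon}\cdot H^{\varepsilon})$, $-\tfrac12|v^{\varepsilon}|^{2}H$, $-\tfrac12|H^{\varepsilon}|^{2}H$, after using $\Div u=\Div H=0$) plus a cubic increment term built from $\nabla\varphi_{\varepsilon}$, the three such terms being exactly those defining $D_{C}(v,u,H;\varepsilon)$, plus an $o_{\mathcal D'}(1)$ remainder each of whose summands is an $L^{\infty}$- or $L^{3}$-factor times a mollification commutator tending to $0$ in $L^{3/2}$. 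In sum, $\partial_{t}(v^{\varepsilon}\cdot H^{\varepsilon})+\text{div}[u(v^{\varepsilon}\cdot H^{\varepsilon})-\tfrac12|v^{\varepsilon}|^{2}H-\tfrac12|H^{\varepsilon}|^{2}H+\Pi^{\varepsilon}H^{\varepsilon}]=D_{C}(v,u,H;\varepsilon)+o_{\mathcal D'}(1)$.

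Letting $\varepsilon\to0$: $v^{\varepsilon}\cdot H^{\varepsilon}\to v\cdot H$ in $L^{3/2}$, and by the regularity collected above the flux converges in $L^{1}((0,T)\times\mathbb{T}^{3})$ to the one displayed in \eqref{1.29}; hence $\partial_{t}(v^{\varepsilon}\cdot H^{\varepsilon})+\text{div}[\cdots]$ converges in $\mathcal D'$, which forces $D_{C}(v,u,H;\varepsilon)$ to converge in $\mathcal D'$ — necessarily to a distribution $D_{C}(v,u,H)$ making \eqref{1.29} an identity. For the $4/3$ law I would, using that the limit is independent of the choice of $\varphi$, take $\varphi$ radial, substitute $y=\varepsilon\rho\ell$ with $|\ell|=1$ in each of the three integrals in $D_{C}(v,u,H;\varepsilon)$, factor out $\varphi'(\rho)$, integrate by parts in $\rho$ using $\int_{0}^{\infty}\varphi'(\rho)\rho^{3}\,d\rho=-3\int_{0}^{\infty}\varphi(\rho)\rho^{2}\,d\rho=-\tfrac{3}{4\pi}$ (the last equality because $\int_{\mathbb{R}^{3}}\varphi=1$), and identify the angular integrals with $4\pi\lambda\,S_{4}(u,v,H;\lambda)$, $4\pi\lambda\,S_{5}(H,v,v;\lambda)$ and $4\pi\lambda\,S_{6}(H,H,H;\lambda)$ evaluated at $\lambda=\varepsilon\rho$; passing to the limit then produces the constant $\tfrac43$ and the identity $2S_{4}(u,v,H)-S_{5}(H,v,v)-S_{6}(H,H,H)=-\tfrac43 D_{C}(v,u,H)$.

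The main obstacle is organisational rather than analytic: after the substitution $v\mapsto u$ one has to check, term by term, that every auxiliary quantity produced by the commutator expansion is either an exact spatial divergence of an $L^{1}$-field or a genuine $o_{\mathcal D'}(1)$ remainder, and that the three surviving cubic increments recombine with precisely the coefficients $-\tfrac12,\,\tfrac14,\,\tfrac14$ of $D_{C}(v,u,H;\varepsilon)$; since the filtered velocity $u$ is bounded, the relevant commutator estimates are if anything easier than in the unfiltered MHD computation of Theorem~\ref{the1.4}. A secondary point deserving care is that $D_{C}(v,u,H;\varepsilon)$ need not be bounded in $L^{1}$, so that its convergence must be obtained indirectly, as the ``leftover'' once every other term in the mollified local balance has been shown to converge in $\mathcal D'$.
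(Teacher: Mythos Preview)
Your overall strategy is the same as the paper's: establish $u\in L^{3}_{t,x}$ from elliptic regularity, get $\Pi\in L^{3/2}_{t,x}$ by Calder\'on--Zygmund, mollify the equations, derive a local cross-helicity balance with a defect term, pass to the limit using $L^{3}$/$L^{3/2}$ integrability, and finally convert to polar coordinates with $\int_{0}^{\infty}r^{3}\varphi'(r)\,dr=-\tfrac{3}{4\pi}$ to extract the $4/3$ factor. All of this matches the paper's proof, which literally transcribes the computation of Theorem~\ref{the1.4} with $b\mapsto H$ and the transporting field $v\mapsto u$.

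One technical point deserves attention, however. You propose to pair the mollified $v$-equation with $H^{\varepsilon}$ and the mollified $H$-equation with $v^{\varepsilon}$, producing $\partial_{t}(v^{\varepsilon}\cdot H^{\varepsilon})$. The paper instead uses the \emph{mixed} pairing --- mollified equation tested against the un-mollified field \emph{and} the original equation tested against the mollified field --- yielding $\partial_{t}\bigl(\tfrac12(v^{\varepsilon}\cdot H+v\cdot H^{\varepsilon})\bigr)$. The distinction matters for the specific claim that \emph{this} $D_{C}(v,u,H;\varepsilon)$ converges: the increment identities of Duchon--Robert type (the analogues of the paper's \eqref{key1}, namely equations (3.10)--(3.12) in the proof) express $\int\nabla\varphi_{\varepsilon}\cdot\delta u\,(\delta v\cdot\delta H)\,d\ell$ etc.\ precisely in terms of quantities like $\partial_{i}(u_{i}v_{j})^{\varepsilon}H_{j}$ with an \emph{un-mollified} $H_{j}$, which are exactly what the mixed pairing produces. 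Your doubly-mollified pairing instead generates terms such as $\partial_{i}(u_{i}v_{j})^{\varepsilon}H_{j}^{\varepsilon}$, which do not combine directly into $D_{C}(v,u,H;\varepsilon)$ without an additional layer of commutator estimates. This is not a fatal gap --- you can either switch to the mixed pairing (trivial fix) or insert the extra $(H_{j}^{\varepsilon}-H_{j})$-type remainders and check they are $o_{\mathcal D'}(1)$ --- but as written the sketch does not quite deliver the stated $D_{C}^{\varepsilon}$.
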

\begin{remark}
Compared with theorem \ref{the1.7}, this theorem is self-contained and independent of the work \cite{[BT]}.
\end{remark}
\begin{remark}
As Corollary \ref{coro1.6}, the dissipation term  can be used for the  investigation of the cross-helicity conservation of the weak solutions in Leray-$\alpha$ MHD equations.
\end{remark}

To end  this section, we introduce some notations which will be used in this paper.
Firstly, for $p\in [1,\,\infty]$, the notation $L^{p}(0,\,T;X)$ stands for the set of measurable functions $f$ on the interval $(0,\,T)$ with values in $X$ and $\|f\|_{X}$ belonging to $L^{p}(0,\,T)$.
Secondly, we will use the standard mollifier kernel, i.e. $\varphi(x)=C_0e^{-\frac{1}{1-|x|^2}}$ for $|x|<1$ and $\varphi(x)=0$ for $|x|\geq 1$, where $C_0$ is a constant such that $\int_{\mathbb{R}^3}\varphi (x) dx=1$. Eventually, for $\varepsilon>0$, we denote the rescaled mollifier by  $\varphi_{\varepsilon}(x)=\frac{1}{\varepsilon^3}\varphi(\frac{x}{\varepsilon})$, and for any function $f\in L^1_{\textrm{loc}}(\mathbb{R}^3)$, its mollified version is defined by
$$
f^\varepsilon(x)=\int_{\mathbb{R}^3}\varphi_{\varepsilon}(x-y)f(y)dy,\ \ x\in \mathbb{R}^3.
$$

\section{ Yaglom's law of conserved quantities  in turbulence}
This section is  devoted to
 the proof of the Yaglom's $4/3$ law  of   energy, cross-helicity and helicity  in  various turbulence models, i.e. the temperature equations, the inviscid MHD equations and ideal Euler equations. The key tool is the dissipation term owing to the lack of smoothness  of the solutions in fluids, which was first introduced by Duchon-Robert in \cite{[DR]} when studying the Onsager's conjecture of the ideal incompressible Euler equations. Moreover,  at the end of this section, as an application, we will invoke the dissipation term to study the conservation of helicity in the  ideal incompressible Euler equations.
\subsection{$4/3$   law of energy    in the temperature equation  }
To begin with, we will prove the Yaglom's 4/3 law for the temperature equations \eqref{temperature equation} by invoking the dissipation term.
\begin{proof}[Proof of Theorem \ref{the1.1}]
First, mollifying the  equation \eqref{temperature equation} (see the notations in Section 1), one has
$$
\partial_{t}\theta^{\varepsilon}+\partial_{i}(v_{i}\theta)^{\varepsilon}=0.
$$
Second, multiplying the above equation by $\theta$ and the  equation \eqref{temperature equation} by $\theta^{\varepsilon}$, respectively,   then sum them  together to find
 $$
 \partial_{t}(\theta\theta^{\varepsilon})+
\partial_{i}(v_{i}\theta)^{\varepsilon}\theta
+\partial_{i}(v_{i}\theta)\theta^{\varepsilon}=0.
$$
For the last two terms on the LHS of above equation, after performing elementary calculations, we discover that
$$
\partial_{i}(v_{i}\theta)^{\varepsilon}\theta
+\partial_{i}(v_{i}\theta)\theta^{\varepsilon}=\partial_{i}(v_{i}\theta\theta^{\varepsilon})
+\partial_{i}(v_{i}\theta)^{\varepsilon}\theta-
v_{i}\theta\partial_{i}\theta^{\varepsilon},
$$
which yields that
$$ \f12\partial_{t}(\theta\theta^{\varepsilon})+\f12\partial_{i}(v_{i}\theta\theta^{\varepsilon})
=-\f12
\B(\partial_{i}(v_{i}\theta)^{\varepsilon}\theta
-v_{i}\theta\partial_{i}\theta^{\varepsilon}\B) .  $$
This means that $v_{i}\theta\partial_{i}\theta^{\varepsilon}-\partial_{i}(v_{i}\theta)^{\varepsilon}\theta
$ converges in accord with $  \partial_{t}(\theta\theta^{\varepsilon})+ \partial_{i}(v_{i}\theta\theta^{\varepsilon})
$ in the sense of distributions as $\varepsilon\rightarrow0.$
Actually, according to hypothesis \eqref{the1.1c}, we can assert that $\partial_{t}(\theta\theta^{\varepsilon})+ \partial_{i}(v_{i}\theta\theta^{\varepsilon})
$ tend to  $\partial_{t}(\theta\theta )+
\partial_{i}(v_{i}\theta\theta )
$ in the sense of distributions as $\varepsilon\rightarrow0.$
Thus, our next goal is to show that the limit of $-\f14\int_{\mathbb{T}^{3}}
\nabla\varphi_{\varepsilon}(|\ell|)\cdot\delta v(\ell)|\delta  \theta(\ell)|^{2}d\ell$ is the same as that of $-\f12\B(\partial_{i}(v_{i}\theta)^{\varepsilon}\theta
-v_{i}\theta\partial_{i}\theta^{\varepsilon}\B)$ as $\varepsilon\rightarrow0.$
To this end, we write
$$\delta v(\ell)=v_{i}(x+\ell)-v_{i}(x)=V_{i}-v_{i} ~ \text{and}  ~\delta  \theta(\ell)=\theta(x+\ell)-\theta(x)=\Theta -\theta. $$
After a  straightforward calculation, it is easy to check that
$$\ba
&\int_{\mathbb{T}^{3}}\nabla\varphi_{\varepsilon}(\ell)\cdot\delta v(\ell)|\delta  \theta(\ell)|^{2}d\ell\\
=&\int_{\mathbb{T}^{3}} \partial_{l_{i}}\varphi_{\varepsilon}(\ell)[V_{i}(x+\ell)-v_{i}(x)][\Theta(x+\ell)-\theta(x)]^{2} d\ell\\=&\int_{\mathbb{T}^{3}} \partial_{l_{i}}\varphi_{\varepsilon}(\ell)[V_{i} \Theta ^{2}
-2V_{i}\Theta\theta +V_{i}\theta^{2}-v_{i}\Theta^{2}+2v_{i}\Theta\theta-v_{i} \theta^{2}]  d\ell.
\ea$$
For the first term on the RHS of above equation, by changing variables, one has
$$\ba
\int_{\mathbb{T}^{3}} \partial_{l_{i}}\varphi_{\varepsilon}(\ell) V_{i} \Theta ^{2}
   d\ell=&\int_{\mathbb{T}^{3}} \partial_{l_{i}}\varphi_{\varepsilon}(\ell) v_{i}(x+\ell)
   \theta^{2}(x+\ell) d\ell\\
   =&\int_{\mathbb{T}^{3}} \partial_{\eta_{i}}\varphi_{\varepsilon}(\eta-x) v_{i}(\eta)
   \theta^{2}(\eta) d\eta\\
   =&-\int_{\mathbb{T}^{3}} \partial_{x_{i}}\varphi_{\varepsilon}(\eta-x) v_{i}(\eta)
   \theta^{2}(\eta) d\eta\\
   =&-\partial_{i}(v_{i} \theta^{2}\ast\varphi_{\varepsilon} )\\
   =&-\partial_{i}(v_{i} \theta^{2} )^{\varepsilon}.
\ea$$
Then by the same token as  above and using the divergence-free condition, we note that
$$\ba
&\int_{\mathbb{T}^{3}} \partial_{l_{i}}\varphi_{\varepsilon}(\ell)[V_{i} \Theta ^{2}
-2V_{i}\Theta\theta +V_{i}\theta^{2}-v_{i}\Theta^{2}+2v_{i}\Theta\theta-v_{i} \theta^{2}]  d\ell\\
=&-\partial_{i}(v_{i} \theta^{2} )^{\varepsilon}+2\partial_{i}(v_{i} \theta  )^{\varepsilon} \theta -\partial_{i}v_{i}^{\varepsilon} \theta^{2}+v_{i}\partial_{i}(  \theta^{2} )^{\varepsilon}-2v_{i}\partial_{i}  \theta ^{\varepsilon}\theta-v_{i} \theta^{2}\int_{\mathbb{T}^{3}} \partial_{l_{i}}\varphi_{\varepsilon}(\ell)   d\ell\\
=&\partial_{i}\B(v_{i}(  \theta^{2} )^{\varepsilon}-(v_{i} \theta^{2} )^{\varepsilon}\B)+2\partial_{i}(v_{i} \theta  )^{\varepsilon} \theta-2v_{i}\partial_{i}  \theta ^{\varepsilon}\theta,
\ea$$
which implies
\be\ba\label{key1}
&\int_{\mathbb{T}^{3}}\nabla\varphi_{\varepsilon}(\ell)\cdot\delta v(\ell)|\delta  \theta(\ell)|^{2}d\ell\\
=&\partial_{i}\B(v_{i}(  \theta^{2} )^{\varepsilon}-(v_{i} \theta^{2} )^{\varepsilon}\B)+2\partial_{i}(v_{i} \theta  )^{\varepsilon} \theta-2v_{i}\partial_{i}  \theta ^{\varepsilon}\theta.\ea\ee
In the light of condition
 \eqref{the1.1c}, we see that $\partial_{i}\B(v_{i}(  \theta^{2} )^{\varepsilon}-(v_{i} \theta^{2} )^{\varepsilon}\B)$ converges to $0$ in the sense of distributions on $(0,T)\times\mathbb{T}^{3}$ as $\varepsilon\rightarrow0$. Hence, we are in a position to finish the first part of the proof of Theorem \ref{the1.1}. It remains to establish \eqref{DRYaglom4/3law}.

Utilizing the polar coordinates and changing variables several times, we know that
\be\ba\label{key2}
D_{\varepsilon}(\theta,v)=&-\f14\int_{\mathbb{T}^{3}}
\nabla\varphi_{\varepsilon}(\ell)\cdot\delta v(\ell)|\delta  \theta(\ell)|^{2}d\ell\\
=&-\f14\int_{\mathbb{T}^{3}}
 \f{1}{\varepsilon^{4}}\varphi'(\f{|\ell|}{\varepsilon})\f{\ell}{|\ell|}\cdot\delta v(\ell)|\delta  \theta(\ell)|^{2}d\ell\\
 =&-\f14\int_{\mathbb{T}^{3}}
 \f{1}{\varepsilon }\varphi'( |\xi| )\f{\xi}{|\xi |}\cdot\delta v(\xi\varepsilon)|\delta  \theta(\xi\varepsilon)|^{2}d\xi\\
=&-\f14\int_{0}^{\infty}\int_{\partial B  } \f{r^{2}}{\varepsilon }\varphi'( |\zeta r| )\f{\zeta}{|\zeta|}\cdot[v(x+ \zeta r\varepsilon)-v(x)]
[\theta(x+\zeta r\varepsilon)-\theta(x)]^{2}d\sigma(\zeta)dr\\
=&-\f14\int_{0}^{\infty}\int_{\partial B } \f{r^{3}}{r\varepsilon }\varphi'( r) \zeta \cdot[v(x+ \zeta r\varepsilon)-v(x)]
[\theta(x+\zeta r\varepsilon)-\theta(x)]^{2}d\sigma(\zeta)dr\\
=&-\pi\int_{0}^{\infty}r^{3}\varphi'( r)dr\int_{\partial B }\f{\zeta \cdot[v(x+ \zeta r\varepsilon)-v(x)]
[\theta(x+\zeta r\varepsilon)-\theta(x)]^{2}\f{d\sigma(\zeta)}{4\pi}}{r\varepsilon }.
\ea\ee
In view of integration by parts, we arrive at
\be\ba\label{key3}
\int_{0}^{\infty}r^{3}\varphi'( r)dr=&-3\int_{0}^{\infty}r^{2}\varphi ( r)dr\\
=&-\f{3}{4\pi}\int_{\mathbb{R}^{3}} \varphi ( \ell)d\ell\\
=&-\f{3}{4\pi}.
\ea\ee
Consequently, we know that
$$\ba
 D(\theta,v)=&\lim_{\varepsilon\rightarrow0}D_{\varepsilon}(\theta,v)\\
 =&-\pi\int_{0}^{\infty}r^{3}\varphi'( r)dr\lim_{\varepsilon\rightarrow0}\int_{\partial B }\f{\zeta \cdot[v(x+ \zeta r\varepsilon)-v(x)]
[\theta(x+\zeta r\varepsilon)-\theta(x)]^{2}\f{d\sigma(\zeta)}{4\pi}}{r\varepsilon }\\
=&-\f{3}{4} S(\theta,v).
\ea$$
This implies the desired result.
\end{proof}

\subsection{$4/3$ law of energy and cross-helicity in the inviscid MHD flow
  }
In this subsection, we mainly focus on the proof of  Yaglom's 4/3 law of the energy and cross-helicity conservation in the inviscid MHD flow by an application of Els\"asser variables  $u ,h $ and the original variables $v,b$, respectively.
\begin{proof}[Proof of Theorem \ref{the1.2}]
First,  by mollifying the  equation  \eqref{mhdElsasser}, we have
 \be\label{3.1}\left\{\ba
&(u_{j}^{\varepsilon})_{t}+\partial_{i}(h_{i}u_{j})^{\varepsilon}+\partial_{j}\Pi^{\varepsilon} =0, \\
&(h_{j}^{\varepsilon})_{t}+\partial_{i}(u_{i}h_{j})^{\varepsilon}+\partial_{j}\Pi^{\varepsilon} =0, \\
&\Div u^{\varepsilon}=\Div h^{\varepsilon}=0.
 \ea\right.\ee
On the one hand, multiplying $\eqref{3.1}_{1}$ by $u_{j}$ and $\eqref{3.1}_{2}$ by $h_{j}$, respectively,  one has
 \be\label{3.2}\left\{\ba
&(u_{j}^{\varepsilon})_{t}u_{j}+\partial_{i}(h_{i}u_{j})^{\varepsilon}u_{j}+\partial_{j}\Pi^{\varepsilon} u_{j} =0, \\
&(h_{j}^{\varepsilon})_{t}h_{j}+\partial_{i}(u_{i}h_{j})^{\varepsilon}h_{j}+\partial_{j}\Pi^{\varepsilon} h_{j} =0.
 \ea\right.\ee
By the same token,  it follows from  from \eqref{mhdElsasser} that
 \be\label{3.3}\left\{\ba
&(u_{j})_{t}u^{\varepsilon}_{j}+\partial_{i}(h_{i}u_{j})u_{j}^{\varepsilon}+\partial_{j}\Pi u_{j}^{\varepsilon}  =0, \\
&(h_{j})_{t}h_{j}^{\varepsilon}+\partial_{i}(u_{i}h_{j})h_{j}^{\varepsilon}+\partial_{j}\Pi h_{j} ^{\varepsilon} =0.
 \ea\right.\ee
Then, in combination with  \eqref{3.2} and \eqref{3.3}, we discover that
  \be\label{3.3-1}\left\{\ba
&(u_{j}^{\varepsilon}u_{j})_{t}+\partial_{i}(h_{i}u_{j})^{\varepsilon}u_{j}+\partial_{i}(h_{i}u_{j})u_{j}^{\varepsilon}
+\partial_{j}\Pi^{\varepsilon} u_{j}+\partial_{j}\Pi u_{j}^{\varepsilon} =0, \\
&(h_{j}^{\varepsilon}h_{j})_{t}+\partial_{i}(u_{i}h_{j})^{\varepsilon}h_{j}+\partial_{i}(u_{i}h_{j})h_{j}^{\varepsilon}
+\partial_{j}\Pi^{\varepsilon} h_{j} +\partial_{j}\Pi h_{j}^{\varepsilon} =0.
 \ea\right.\ee
On the other hand, by an elementary calculation, we have the following facts that
$$\ba
&\partial_{i}(h_{i}u_{j})^{\varepsilon}u_{j}+\partial_{i}(h_{i}u_{j})u_{j}^{\varepsilon}
 =\partial_{i}(h_{i}u_{j}u_{j}^{\varepsilon})+\partial_{i}(h_{i}u_{j})^{\varepsilon}u_{j}-
(h_{i}u_{j}) \partial_{i}u_{j}^{\varepsilon},\\
&\partial_{i}(u_{i}h_{j})^{\varepsilon}h_{j}+\partial_{i}(u_{i}h_{j})h_{j}^{\varepsilon}=
\partial_{i}(u_{i}h_{j}h_{j}^{\varepsilon})+\partial_{i}(u_{i}h_{j})^{\varepsilon}h_{j}-(h_{i}h_{j})\partial_{i}h_{j}^{\varepsilon}.
\ea$$
Eventually, substituting the above facts into \eqref{3.3-1}, we have
 \be\label{3.3-2}\left\{\ba
&\f{(u_{j}^{\varepsilon}u_{j})_{t}}{2}+
\f{\partial_{i}(h_{i}u_{j}u_{j}^{\varepsilon})}{2}
 +\f{\partial_{j}\Pi^{\varepsilon} u_{j}+\partial_{j}\Pi u_{j}^{\varepsilon}}{2} =-\f12\B(\partial_{i}(h_{i}u_{j})^{\varepsilon}u_{j}-
(h_{i}u_{j}) \partial_{i}u_{j}^{\varepsilon}\B), \\
&\f{(h_{j}^{\varepsilon}h_{j})_{t}}{2}
+\f{\partial_{i}(u_{i}h_{j}h_{j}^{\varepsilon})}{2}
+\f{\partial_{j}\Pi^{\varepsilon} h_{j} +\partial_{j}\Pi h_{j}^{\varepsilon}}{2} =-\f12\B(\partial_{i}(u_{i}h_{j})^{\varepsilon}h_{j}
-(u_{i}h_{j})\partial_{i}h_{j}^{\varepsilon}\B).
 \ea\right.\ee
 Moreover, it follows from   \eqref{mhdElsasser} that
 $$
 -\partial_{j}\partial_{j}\Pi=\partial_{i}\partial_{j}(u_{j}h_{i}),
 $$
 which together with the classical Calder\'on-Zygmund Theorem  leads to $\Pi\in L^{\f32}(0,T;L^{\f32}(\mathbb{T}^{3}))$.

For the equation $\eqref{3.3-2}_1$, in the light of  $u,h \in L^{3}(0,T;L^{3}(\mathbb{T}^{3}))$  and $\Pi\in L^{\f32}(0,T;L^{\f32}(\mathbb{T}^{3}))$, we know that
 $$h_{i}u_{j}u_{j}^{\varepsilon} +\Pi u_{j}^{\varepsilon}+\Pi^{\varepsilon} u_{j} $$  tends to $$h(|u|^{2}) +2\Pi u $$ in the sense of distributions on $(0,T)\times\mathbb{T}^{3}$ as $\varepsilon\rightarrow0$. Hence, we know that the limit of
 $$-\B(\partial_{i}(h_{i}u_{j})^{\varepsilon}u_{j}-
(h_{i}u_{j}) \partial_{i}u_{j}^{\varepsilon} \B)$$
    is the same as $$(u_{j}^{\varepsilon}u_{j})_{t} +\partial_{i}(h_{i}u_{j}u_{j}^{\varepsilon})
+\partial_{j}\Pi^{\varepsilon} u_{j}+\partial_{j}\Pi u_{j}^{\varepsilon}
  $$ as $\varepsilon\rightarrow0.$

Then following  exactly along the same line in the proof of  \eqref{key1}, we deduce that
 $$\ba
\int_{\mathbb{T}^{3}}\nabla\varphi_{\varepsilon}(\ell)\cdot\delta h(\ell)|\delta  u(\ell)|^{2}d\ell
=&-\partial_{i}(h_{i} u_{j}^{2} )^{\varepsilon}+2\partial_{i}(h_{i} u_{j}  )^{\varepsilon} u_{j}  +h_{i}\partial_{i}(  u_{j}^{2} )^{\varepsilon}-2h_{i}\partial_{i}  u_{j} ^{\varepsilon}u_{j}\\
=& \partial_{i}\B(h_{i}(  u_{j}^{2} )^{\varepsilon}-(h_{i} u_{j}^{2} )^{\varepsilon}\B)+2\partial_{i}(h_{i} u_{j}  )^{\varepsilon} u_{j}   -2h_{i}\partial_{i}  u_{j} ^{\varepsilon}u_{j}.
\ea$$
Thanks to $h,u\in L^{3}(0,T;L^{3}(\mathbb{T}^{3}))$, by virtue of the standard properties of  mollification,  we know that $\partial_{i}\B(h_i(  u_{j}^{2} )^{\varepsilon}-(h_{i} u_{j}^{2} )^{\varepsilon}\B)\rightarrow0$ as $\varepsilon\rightarrow0$ in the sense of distributions.
 At this stage,  from the above argument, we see that $D_{\varepsilon}( u,h)$ converges towards  $$\partial_{t}(\f12|u|^{2})  +\text{div}\B(h(\f12|u| ^{2})+u\Pi\B).$$
Reasoning as above, we can prove
  $D_{\varepsilon}( h, u)$ tends to $$\partial_{t}(\f12|h|^{2})  +\text{div}\B(u(\f12|h| ^{2})+h\Pi\B).$$
Now we turn our attention to showing
\eqref{DRYaglom4/3lawmhd}.
Repeating the derivation of  \eqref{key2}, we find
$$\ba
D_{\varepsilon}(u,h)=&-\f14\int_{\mathbb{T}^{3}}
\nabla\varphi_{\varepsilon}(|\ell|)\cdot\delta u(\ell)|\delta h(\ell)|^{2}d\ell\\
 =&-\pi\int_{0}^{\infty}r^{3}\varphi'( r)dr\int_{\partial B }\f{\zeta \cdot\B(u(x+ \zeta r\varepsilon)-u(x)\B)
\B(h(x+\zeta r\varepsilon)-h(x)\B)^{2}\f{d\sigma(\zeta)}{4\pi}}{r\varepsilon },\\
\ea$$
and
$$\ba
D_{\varepsilon}(h,u)=&-\f14\int_{\mathbb{T}^{3}}
\nabla\varphi_{\varepsilon}(|\ell|)\cdot\delta h(\ell)|\delta u(\ell)|^{2}d\ell\\
 =&-\pi\int_{0}^{\infty}r^{3}\varphi'( r)dr\int_{\partial B }\f{\zeta \cdot\B(h(x+ \zeta r\varepsilon)-h(x)\B)
\B(u(x+\zeta r\varepsilon)-u(x)\B)^{2}\f{d\sigma(\zeta)}{4\pi}}{r\varepsilon }.
\ea$$
Combining this with \eqref{key3}, we arrive at
\eqref{DRYaglom4/3lawmhd}. The proof of this theorem is completed.
\end{proof}

\begin{proof}[Proof of Corollary \ref{coro1.3}]
With Theorem \ref{the1.2} in hand, we immediately get this corollary by a direct computation. Here, we omit the details.
\end{proof}
Next, we are devoted to the proof of Theorem \ref{the1.4} by an application of the original MHD equations.
\begin{proof}[Proof of Theorem \ref{the1.4}]
Since the proof of  Yaglom's 4/3 law of energy  in MHD flow is similar to the proof of this theorem, we mainly focus on the Yaglom's 4/3 law for cross helicity.
Mollifying the  \eqref{MHD}, we deduce that
$$\left\{\ba
&(v_{j}^{\varepsilon})_{t} +\partial_{i}(v_{i}  v_{j})^{\varepsilon}-\partial_{i}(b_{i}  b_{j})^{\varepsilon}+\partial_{j}\Pi ^{\varepsilon}=0, \\
&(b_{j}^{\varepsilon})_{t} +\partial_{i}(v_{i}  b_{j})^{\varepsilon}-\partial_{i}(v_{j}  b_{i})^{\varepsilon} =0, \\
&\Div v^{\varepsilon}=\Div b^{\varepsilon}=0,
 \ea\right.$$
from which it follows that
 \be\left\{\ba\label{MHD3.10}
&(v_{j}^{\varepsilon})_{t}b_{j} +\partial_{i}(v_{i}  v_{j})^{\varepsilon}b_{j}-\partial_{i}(b_{i}  b_{j})^{\varepsilon}b_{j}+ \partial_{j}\Pi ^{\varepsilon}b_{j}=0, \\
&(b_{j}^{\varepsilon})_{t} v_{j}+\partial_{i}(v_{i}  b_{j})^{\varepsilon}v_{j}-\partial_{i}(v_{j}  b_{i})^{\varepsilon}v_{j} =0.
 \ea\right.\ee
Likewise, we have
 \be\left\{\ba \label{MHD3.11}
&(v_{j})_{t}b_{j}^{\varepsilon} +\partial_{i}(v_{i}  v_{j})b_{j}^{\varepsilon}-\partial_{i}(b_{i}  b_{j})b_{j}^{\varepsilon}+ \partial_{j}\Pi b_{j}^{\varepsilon}=0, \\
&(b_{j})_{t} v_{j}^{\varepsilon}+\partial_{i}(v_{i}  b_{j})v_{j}^{\varepsilon}-\partial_{i}(v_{j}  b_{i})v_{j}^{\varepsilon} =0. \\
  \ea\right.\ee
According to \eqref{MHD3.10} and \eqref{MHD3.11}, we see that
\be\ba\label{2.11}
&(v_{j}^{\varepsilon}b_{j})_{t} +(v_{j}b_{j}^{\varepsilon})_{t}
+\partial_{i}(v_{i}  v_{j})^{\varepsilon}b_{j}+\partial_{i}(v_{i}  v_{j})b_{j}^{\varepsilon}
-\partial_{i}(b_{i}  b_{j})^{\varepsilon}b_{j}-\partial_{i}(b_{i}  b_{j})b_{j}^{\varepsilon}
\\&+ \partial_{j}\Pi ^{\varepsilon}b_{j}+ \partial_{j}\Pi b_{j}^{\varepsilon}
+\partial_{i}(v_{i}  b_{j})^{\varepsilon}v_{j}+\partial_{i}(v_{i}  b_{j})v_{j}^{\varepsilon}
-\partial_{i}(v_{j}  b_{i})^{\varepsilon}v_{j} -\partial_{i}(v_{j}  b_{i})v_{j}^{\varepsilon}=0.
\ea\ee
In the light of a calculation, one has
\be\ba\label{2.12}
&\partial_{i}(v_{i}  v_{j})^{\varepsilon}b_{j}+\partial_{i}(v_{i}  v_{j})b_{j}^{\varepsilon}
=\partial_{i}(v_{i}  v_{j}b_{j}^{\varepsilon})+\partial_{i}(v_{i}  v_{j})^{\varepsilon}b_{j}-
 v_{i}  v_{j}\partial_{i}b_{j}^{\varepsilon},
\\
&-\B(\partial_{i}(b_{i}  b_{j})^{\varepsilon}b_{j}+\partial_{i}(b_{i}  b_{j})b_{j}^{\varepsilon}\B)=-\B(
\partial_{i}(b_{i}  b_{j}b_{j}^{\varepsilon})+\partial_{i}(b_{i}  b_{j})^{\varepsilon}b_{j}-(b_{i}  b_{j})\partial_{i}b_{j}^{\varepsilon}\B),
\\
&\partial_{i}(v_{i}  b_{j})^{\varepsilon}v_{j}+\partial_{i}(v_{i}  b_{j})v_{j}^{\varepsilon}=
\partial_{i}(v_{i}  b_{j}v_{j}^{\varepsilon})+\partial_{i}(v_{i}  b_{j})^{\varepsilon}v_{j}
-(v_{i}  b_{j})\partial_{i}v_{j}^{\varepsilon},
\\
&-\B(\partial_{i}(v_{j}  b_{i})^{\varepsilon}v_{j} +\partial_{i}(v_{j}  b_{i})v_{j}^{\varepsilon}\B)
=-\B(\partial_{i}(v_{j}  b_{i}v_{j}^{\varepsilon})+\partial_{i}(v_{j}  b_{i})^{\varepsilon}v_{j} -
(v_{j}  b_{i})\partial_{i}v_{j}^{\varepsilon}\B).
\ea\ee
Then plugging  \eqref{2.12} into  \eqref{2.11}, we arrive at
\be\label{2.13}\ba
&\f{(v_{j}^{\varepsilon}b_{j})_{t} +(v_{j}b_{j}^{\varepsilon})_{t}}{2}
+\f{\partial_{i}(v_{i}  v_{j}b_{j}^{\varepsilon})+\partial_{i}(v_{i}  b_{j}v_{j}^{\varepsilon})-\partial_{i}(b_{i}  b_{j}b_{j}^{\varepsilon})-\partial_{i}(v_{j}  b_{i}^{\varepsilon}v_{j})}{2} +\f{\partial_{j}\Pi ^{\varepsilon}b_{j}+ \partial_{j}\Pi b_{j}^{\varepsilon}}{2}\\=&-\f{1}{2}\B(\partial_{i}(v_{i}  v_{j})^{\varepsilon}b_{j}-(v_{i}  v_{j})\partial_{i}b_{j}^{\varepsilon}\B)-\f12\B(\partial_{i}(v_{i}  b_{j})^{\varepsilon}v_{j}
-(v_{i}  b_{j})\partial_{i}v_{j}^{\varepsilon}\B)\\&
+\f12\B(\partial_{i}(b_{i}  b_{j})^{\varepsilon}b_{j}-(b_{i}  b_{j})\partial_{i}b_{j}^{\varepsilon}\B)
+\f12\B(\partial_{i}(v_{j}  b_{i})^{\varepsilon}v_{j} -
(v_{j}  b_{i})\partial_{i}v_{j}^{\varepsilon}\B).
\ea\ee
Next, applying the divergence operator to equation $\eqref{MHD}_{1}$, one has
$$
-\partial_{i}\partial_{i}\Pi=\partial_{i}\partial_{j}(v_{i}v_{j})
-\partial_{i}\partial_{j}(b_{i}b_{j}),
$$
which together with
$v, b\in L^{3}(0,T;L^{3}(\mathbb{T}^{3}))$ and  the
classical
Calder\'on-Zygmund
Theorem yields that    $\Pi\in L^{\f32}(0,T;L^{\f32}(\mathbb{T}^{3}))$. Moreover,
$v, b\in L^{3}(0,T;L^{3}(\mathbb{T}^{3}))$ and $\Pi\in L^{\f32}(0,T;L^{\f32}(\mathbb{T}^{3}))$ guarantee that the limit of the left hand side of
\eqref{2.13} is that
$$\partial_{t}(  v\cdot b)  +\text{div}\B(v(v\cdot b)-\f12(|b|^{2})b-\f12b(|v|^{2})+b\Pi\B)$$
in the sense of distributions as $\varepsilon\rightarrow0$.

Meanwhile, exactly as the derivation of \eqref{key1}, we discover that
\be\label{2.14}\ba
&\int_{\mathbb{T}^{3}}\nabla\varphi_{\varepsilon}(\ell)\cdot\delta v(\ell)|\delta  b(\ell)\delta  v(\ell)| d\ell\\
=&-\partial_{i}(v_{i}b_{j}v_{j})^{\varepsilon}+\partial_{i}(v_{i}b_{j})^{\varepsilon}v_{j}
+\partial_{i}(v_{i}v_{j})^{\varepsilon}b_{j}-\partial_{i}v_{i}^{\varepsilon}v_{j}b_{j}
+v_{i}\partial_{i}(b_{j}v_{j})^{\varepsilon}-v_{i}v_{j}\partial_{i}b_{j}^{\varepsilon}
-v_{i}b_{j}\partial_{i}v_{j}^{\varepsilon}\\
=&-\partial_{i}(v_{i}b_{j}v_{j})^{\varepsilon}+\partial_{i}(v_{i}b_{j})^{\varepsilon}v_{j}
+\partial_{i}(v_{i}v_{j})^{\varepsilon}b_{j}
+v_{i}\partial_{i}(b_{j}v_{j})^{\varepsilon}-v_{i}v_{j}\partial_{i}b_{j}^{\varepsilon}
-v_{i}b_{j}\partial_{i}v_{j}^{\varepsilon} \\
=&\partial_{i}\B(v_{i}(b_{j}v_{j})^{\varepsilon}-(v_{i}b_{j}v_{j})^{\varepsilon}\B)
+\partial_{i}(v_{i}v_{j})^{\varepsilon}b_{j}
 -v_{i}v_{j}\partial_{i}b_{j}^{\varepsilon}
 +\partial_{i}(v_{i}b_{j})^{\varepsilon}v_{j}
-v_{i}b_{j}\partial_{i}v_{j}^{\varepsilon},\ea\ee
where the divergence-free conditions $\Div v=0$ and $\Div b=0$ are used.
Following a similar argument, we also find
\be\label{2.15}\ba
\int_{\mathbb{T}^{3}}\nabla\varphi_{\varepsilon}(\ell)\cdot\delta b(\ell)|\delta  b(\ell)|^{2}d\ell
=& \partial_{i}\B(b_{i}(  b_{j}^{2} )^{\varepsilon}-(b_{i} b_{j}^{2} )^{\varepsilon}\B)+2\partial_{i}(b_{i} b_{j}  )^{\varepsilon} b_{j}   -2b_{i}\partial_{i}  b_{j} ^{\varepsilon}b_{j},\ea\ee
and
\be\label{2.16}\ba
\int_{\mathbb{T}^{3}}\nabla\varphi_{\varepsilon}(\ell)\cdot\delta b(\ell)|\delta  v(\ell)|^{2}d\ell
 =& \partial_{i}\B(b_{i}( v_{j}^{2} )^{\varepsilon}-(b_{i} v_{j}^{2} )^{\varepsilon}\B)+2\partial_{i}(b_{i} v_{j}  )^{\varepsilon} v_{j}   -2b_{i}\partial_{i}  v_{j} ^{\varepsilon} v_{j}.
 \ea \ee
 We conclude by $v, b\in L^{3}(0,T;L^{3}(\mathbb{T}^{3}))$ that all the first terms on the right hand side of \eqref{2.14}-\eqref{2.16}  tend to zero in the sense of distributions as $\varepsilon\rightarrow0$.
 Collecting the above estimates together and the definition of $ D_{CH}( v,b,\varepsilon)$, we infer that the limit of $ D_{CH}( v,b,\varepsilon)$ is the same as that of the right hand side of \eqref{2.13}
in the sense of distributions as $\varepsilon\rightarrow0$. Hence, we finish the proof of the first part in this theorem.

We turn our attention to the rest of this proof. In the light of a  slight variant of the proof of \eqref{key2}, we derive that
  $$\ba
 & D_{CH}( v,b,\varepsilon)\\
 =&-\f12\int_{\mathbb{T}^{3}}\nabla\varphi_{\varepsilon}(\ell)\cdot\delta v(\ell) |\delta  v(\ell)\delta  b(\ell)| d\ell+\f14\int_{\mathbb{T}^{3}}\nabla\varphi_{\varepsilon}(\ell)\cdot\delta b(\ell)(| \delta  v(\ell)|^{2}|+| \delta  b(\ell)|^{2}|) d\ell\\
  =&-2\pi\int_{0}^{\infty}r^{3}\varphi'( r)dr\int_{\partial B }\f{\zeta \cdot\B(v(x+ \zeta r\varepsilon)-v(x)\B)
\B((v(x+\zeta r\varepsilon)-v(x))(b(x+\zeta r\varepsilon)-b(x))\B)\f{d\sigma(\zeta)}{4\pi}}{r\varepsilon }\\&+\pi\int_{0}^{\infty}r^{3}\varphi'( r)dr\int_{\partial B }\f{\zeta \cdot\B(b(x+ \zeta r\varepsilon)-b(x)\B)
\B(v(x+\zeta r\varepsilon)-v(x)\B)^{2}\f{d\sigma(\zeta)}{4\pi}}{r\varepsilon }\\&+\pi\int_{0}^{\infty}r^{3}\varphi'( r)dr\int_{\partial B }\f{\zeta \cdot\B(b(x+ \zeta r\varepsilon)-b(x)\B)
\B(b(x+\zeta r\varepsilon)-b(x)\B)^{2}\f{d\sigma(\zeta)}{4\pi}}{r\varepsilon }.
\ea$$
Combining this and \eqref{key3}, we get
$$
D_{CH}( v,b )=-\f32S_{4}( v,v,b )+\f34S_{5}( b,v,v )+\f34S_{6}( b,b,b ).$$
This enables us to complete the proof.
\end{proof}
\subsection{$4/3$ law of helicity in the inviscid Euler fluid
  }

In this final subsection, we are concerned with the proof of Yaglom's 4/3 law for the helicity of ideal incompressible Euler equations.
\begin{proof}[Proof of Theorem \ref{the1.5}]
Mollifying the equation \eqref{euler}, we have
\begin{equation}\label{euler2}\left\{\begin{aligned}
&v^{\varepsilon}_{t} +(v\cdot\nabla v)^{\varepsilon}+\nabla \pi^{\varepsilon} = 0,\\
&\omega^{\varepsilon}_{t}+(v\cdot\nabla\omega)^{\varepsilon}-(\omega\cdot\nabla v)^{\varepsilon}=0 ,\\
&\Div v^{\varepsilon}= \text{div\,}\omega^{\varepsilon} =0.
\end{aligned}\right.\end{equation}
Then we write
\be\ba\label{2.18}
(v_{j}\omega_{j}^{\varepsilon})_{t}+(v_{j}^{\varepsilon}\omega_{j})_{t}
+I+II+III-IV=0,\ea\ee
where
$$\ba
&I=\partial_{i}(v_{i}v_{j})^{\varepsilon}\omega_{j}+\partial_{i}(v_{i}v_{j})\omega_{j}^{\varepsilon},\\
&II=\partial_{j}\pi^{\varepsilon}\omega_{j}
+\partial_{j}\pi\omega_{j}^{\varepsilon},\\
&III=\partial_{i}(v_{i}\omega_{j})^{\varepsilon}v_{j}
+\partial_{i}(v_{i}\omega_{j})v_{j}^{\varepsilon},\\
&IV=\partial_{i}(\omega_{i}v_{j}^{\varepsilon})v_{j}+\partial_{i}(\omega_{i}v_{j})v_{j}^{\varepsilon}.
\ea$$
By some calculations,  one has
$$\ba
&\partial_{i}(v_{i}v_{j})^{\varepsilon}\omega_{j}+
\partial_{i}(v_{i}v_{j})\omega_{j}^{\varepsilon}
=\partial_{i}(v_{i}v_{j}\omega_{j}^{\varepsilon})+
\partial_{i}(v_{i}v_{j})^{\varepsilon}\omega_{j}-
(v_{i}v_{j})\partial_{i}\omega_{j}^{\varepsilon},\\
 &\partial_{i}(v_{i}\omega_{j})^{\varepsilon}v_{j}
+\partial_{i}(v_{i}\omega_{j})v_{j}^{\varepsilon}=
\partial_{i}(v_{i}\omega_{j}v_{j}^{\varepsilon})+
\partial_{i}(v_{i}\omega_{j})^{\varepsilon}v_{j}
-(v_{i}\omega_{j})\partial_{i}v_{j}^{\varepsilon},\\
&\partial_{i}(\omega_{i}v_{j}^{\varepsilon})v_{j}+
\partial_{i}(\omega_{i}v_{j})v_{j}^{\varepsilon}
=\partial_{i}(\omega_{i}v_{j}v_{j}^{\varepsilon})+
\partial_{i}(\omega_{i}v_{j}^{\varepsilon})v_{j}
-(\omega_{i}v_{j})\partial_{i}v_{j}^{\varepsilon}.
\ea$$
Then substituting the above equations into \eqref{2.18}, we notice that
\be\ba\label{2.19}
&\f{(v_{j}\omega_{j}^{\varepsilon})_{t}
+(v_{j}^{\varepsilon}\omega_{j})_{t}}{2}
+\f{\partial_{i}(v_{i}v_{j}\omega_{j}^{\varepsilon})+
\partial_{i}(v_{i}\omega_{j}v_{j}^{\varepsilon})
-\partial_{i}(\omega_{i}v_{j}v_{j}^{\varepsilon})}{2}
+\f{\partial_{j}\pi^{\varepsilon}\omega_{j}
+\partial_{j}\pi\omega_{j}^{\varepsilon}}{2}\\
=&-\f{1}{2}\B(\partial_{i}(v_{i}v_{j})^{\varepsilon}\omega_{j}-
(v_{i}v_{j})\partial_{i}\omega_{j}^{\varepsilon}\B)-
\f{1}{2}\B(\partial_{i}(v_{i}\omega_{j})^{\varepsilon}v_{j}
-(v_{i}\omega_{j})\partial_{i}v_{j}^{\varepsilon}\B)+
\f{1}{2}\B(\partial_{i}(\omega_{i}v_{j}^{\varepsilon})v_{j}
-(\omega_{i}v_{j})\partial_{i}v_{j}^{\varepsilon}\B).
\ea\ee
Meanwhile, it follows from $\eqref{euler}_{1}$ that $$-\partial_{i}\partial_{i}\pi=\partial_{i}\partial_{j}(v_{i}v_{j}),$$
which turns out that
$$
\|\pi\|_{L^{\f{p}{2}}(0,T;L^{\f{q}{2}}(\mathbb{T}^{3}))} \leq C
\|v\|^{2}_{L^{p}(0,T;L^{q}(\mathbb{T}^{3}))}, 2<q<\infty.$$
Consequently, we deduce from $v \in L^{q}(0,T;L^{q}(\mathbb{T}^{3}))$ and $\pi\in L^{\f{p}{2}}(0,T;L^{\f{q}{2}}(\mathbb{T}^{3}))$ with $3\leq p,q<\infty$ that
the limits of the LHS of
\eqref{2.19} is that
$$\partial_{t}( v\cdot\omega)  +\text{div}\B(v(\omega\cdot v)-\f12\omega(v^{2})+\omega\pi\B)$$
in the sense of distribution as $\varepsilon\rightarrow0$. To pass to the limits of RHS of
\eqref{2.19}, a slightly modified proof of \eqref{key1} yields that
$$\ba
&\int_{\mathbb{T}^{3}}\nabla\varphi_{\varepsilon}(\ell)\cdot\delta v(\ell)|\delta  \omega(\ell)\delta  v(\ell)| d\ell\\
 =&\partial_{i}\B(v_{i}(\omega_{j}v_{j})^{\varepsilon}-(v_{i}\omega_{j}v_{j})^{\varepsilon}\B)
+\partial_{i}(v_{i}v_{j})^{\varepsilon}\omega_{j}
 -v_{i}v_{j}\partial_{i}\omega_{j}^{\varepsilon}
 +\partial_{i}(v_{i}\omega_{j})^{\varepsilon}v_{j}
-v_{i}\omega_{j}\partial_{i}v_{j}^{\varepsilon}
\ea$$
and
$$\ba
\int_{\mathbb{T}^{3}}\nabla\varphi_{\varepsilon}(\ell)\cdot\delta \omega(\ell)|\delta  v(\ell)|^{2}d\ell
=&-\partial_{i}(\omega_{i} v_{j}^{2} )^{\varepsilon}+2\partial_{i}(\omega_{i} v_{j}  )^{\varepsilon} v_{j}  +\omega_{i}\partial_{i}(  v_{j}^{2} )^{\varepsilon}-2\omega_{i}\partial_{i}  v_{j} ^{\varepsilon}v_{j}\\
=& \partial_{i}\B(\omega_{i}(  v_{j}^{2} )^{\varepsilon}-(\omega_{i} v_{j}^{2} )^{\varepsilon}\B)+2\partial_{i}(\omega_{i} v_{j}  )^{\varepsilon} v_{j}   -2\omega_{i}\partial_{i}  v_{j} ^{\varepsilon}v_{j}.\ea$$
Thanks to \eqref{the1.5c}, we see  that both $\partial_{i}\B(v_{i}(\omega_{j}v_{j})^{\varepsilon}-(v_{i}\omega_{j}v_{j})^{\varepsilon}\B)
$  and $\partial_{i}\B(\omega_{i}(  v_{j}^{2} )^{\varepsilon}-(\omega_{i} v_{j}^{2} )^{\varepsilon}\B)$ tend to zero in the sense of distribution as $\varepsilon\rightarrow0$. With this in hand, we conclude that the limits of RHS of \eqref{2.19} is the same as that of  $D_{\varepsilon}( v,\omega)$. This shows the first part of this theorem.

To get \eqref{helicityYaglom4/3law}, repeating the derivation of \eqref{key2}, we infer that
$$\ba
&D_{\varepsilon}(v,\omega)\\=&-\f12\int_{\mathbb{T}^{3}}\nabla\varphi_{\varepsilon}(\ell)\cdot\delta v(\ell)|\delta  \omega(\ell)\cdot\delta  v(\ell)| d\ell+\f14\int_{\mathbb{T}^{3}}\nabla\varphi_{\varepsilon}(\ell)\cdot\delta \omega(\ell)|\delta  v(\ell)|^{2}d\ell\\
=&-2\pi\int_{0}^{\infty}r^{3}\varphi'( r)dr\int_{\partial B }\f{\zeta \cdot\B(v(x+ \zeta r\varepsilon)-v(x)\B)
\B((v(x+\zeta r\varepsilon)-v(x))(\omega(x+\zeta r\varepsilon)-\omega(x))\B)\f{d\sigma(\zeta)}{4\pi}}{r\varepsilon }\\&+\pi\int_{0}^{\infty}r^{3}\varphi'( r)dr \int_{\partial B }\f{\zeta \cdot\B(\omega(x+ \zeta r\varepsilon)-\omega(x)\B)
\B(v(x+\zeta r\varepsilon)-v(x)\B)^{2}\f{d\sigma(\zeta)}{4\pi}}{r\varepsilon }.
\ea$$
As an application of \eqref{key3}, we find
$$
D(v,\omega)=-\f32 S_{7}(v,v,\omega)+\f34 S_{8}(\omega,v,v),
$$
which implies \eqref{helicityYaglom4/3law}.
This concludes the proof of this theorem.
\end{proof}
As an application of  the dissipation term  $D_{\varepsilon}( v,\omega)$ in Theorem  \ref{the1.5}, we prove Corollary \ref{coro1.6}.
\begin{proof}[Proof of Corollary \ref{coro1.6}]
In view of the H\"older inequality, we obtain
$$
\int_{\mathbb{T}^{3}} |D_{\varepsilon}( v,\omega)|dx\leq \int_{\mathbb{T}^{3}}|\nabla\varphi_{\varepsilon}(\ell)|d\ell\B(\int_{\mathbb{T}^{3}}|\delta v(\ell)|^{\f{9}{2}}dx\B)^{\f49}\B(\int_{\mathbb{T}^{3}} |\delta \omega(\ell)|^{\f{9}{5}} dx\B)^{\f{5}{9}}.
$$
By virtue of \eqref{1.23}, we get
 $$\ba
\int_{\mathbb{T}^{3}} |D_{\varepsilon}( v,\omega)|dx\leq & \int_{\mathbb{T}^{3}}|\nabla\varphi_{\varepsilon}(\ell)|d\ell\B(\int_{\mathbb{T}^{3}}|\delta v(\ell)|^{\f{9}{2}}dx\B)^{\f49}\B(\int_{\mathbb{T}^{3}} |\delta \omega(\ell)|^{\f{9}{5}} dx\B)^{\f{5}{9}}\\
\leq&   \int_{\mathbb{T}^{3}}|\nabla\varphi_{\varepsilon}(\ell)| C(t)^{\f{2}{r_{1}}+\f{1}{r_{2}}}|\ell|^{2\alpha+\beta}\sigma(\ell)d\ell.
\ea$$
Performing a time integration and changing variable, we end up with
$$\ba
 \int_{0}^{T}\int_{\mathbb{T}^{3}} |D_{\varepsilon}( v,\omega)|dxdt \leq&
\int_{0}^{T}C(t)^{\f{2}{r_{1}}+\f{1}{r_{2}}}dt\int_{\mathbb{T}^{3}}
|\nabla\varphi_{\varepsilon}(\ell)|
|\ell|^{2\alpha+\beta}\sigma (\ell)d\ell\\
\leq&C \varepsilon^{2\alpha+\beta-1}\int_{|\xi|<1}|\nabla\varphi (\xi)
||\xi|^{2\alpha+\beta}\sigma (\varepsilon\xi)d\xi.
\ea$$
This means the desired result.
\end{proof}
\section{New Yaglom's laws   for  the Oldroyd-B model  and subgrid scale $\alpha$-models of turbulence  }
\subsection{ $4/3$ law   in Oldroyd-B model  }
  \begin{proof}[Proof of Theorem \ref{the1.6}] We just outline the proof here.
  The pressure equation in Oldroyd-B model \eqref{Oldroyd-B3} is   determined   by
  $$
-\partial_{j}\partial_{j}\Pi=\partial_{j}\partial_{i}(v_{i}v_{j})
-\partial_{j}\partial_{i}(\tau_{i,j}),
$$
  which means that
$$\ba
\|\pi\|_{L^{\f{3}{2}}(0,T;L^{\f{3}{2}}(\mathbb{T}^{3}))} \leq& C
\|v\|^{2}_{L^{3}(0,T;L^{3}(\mathbb{T}^{3}))}+C
\|\tau\|_{L^{\f{3}{2}}(0,T;L^{\f{3}{2}}(\mathbb{T}^{3}))}\\
\leq& C
\|v\|^{2}_{L^{3}(0,T;L^{3}(\mathbb{T}^{3}))}+C
\|\tau\|_{L^{3}(0,T;L^{3}(\mathbb{T}^{3}))}.\ea$$
  As the above deduction, from the Oldroyd-B model \eqref{Oldroyd-B3},
  \be\ba\label{03.1}
 & \partial_{t}(v_{j}v_{j}^{\varepsilon})+\partial_{i}(v_{i}v_{j})v_{j}^{\varepsilon}
  +\partial_{i}(v_{i}v_{j})^{\varepsilon}v_{j}+\partial_{j}\pi v_{j}^{\varepsilon}+\partial_{j}\pi ^{\varepsilon}  v_{j}=\partial_{i}\tau_{ij}v_{j}^{\varepsilon}+\partial_{i}\tau_{ij}^{\varepsilon}v_{j}\\
& \partial_{t}(\tau_{ij}\tau_{ij}^{\varepsilon})+
\partial_{k}(v_{k}\tau_{ij})^{\varepsilon}\tau_{ij}
+\partial_{k}(v_{k}\tau_{ij})\tau_{ij}^{\varepsilon}+2\tau_{ij}\tau_{ij}^{\varepsilon}
=\f{\partial_{i}v_{j}^{\varepsilon}+ \partial_{j}v_{i}^{\varepsilon}}{2}\tau_{ij}+ \tau_{ij}^{\varepsilon}\f{\partial_{i}v_{j} + \partial_{j}v_{i}}{2}.
\ea\ee
By a straightforward computation,  we notice that
$$  \ba &
\partial_{i}\tau_{ij}v_{j}^{\varepsilon}=
\partial_{i}[\tau_{ij}v_{j}^{\varepsilon}]-\tau_{ij}\partial_{i}v_{j}^{\varepsilon},\ \partial_{i}\tau_{ij}^{\varepsilon}v_{j}=
\partial_{i}[\tau_{ij}^{\varepsilon}v_{j}]-\tau_{ij}^{\varepsilon}\partial_{i}v_{j}
\ea$$
  In view of  the symmetric of tensor $\tau$, we discover that
  $$\ba
  \tau_{ij}\partial_{i}v_{j}^{\varepsilon}=  \tau_{ij}\f{\partial_{i}v_{j}^{\varepsilon}+ \partial_{j}v_{i}^{\varepsilon}}{2},
 \tau_{ij}^{\varepsilon}\partial_{i}v_{j}=  \tau_{ij}^{\varepsilon}\f{\partial_{i}v_{j} + \partial_{j}v_{i}}{2},  \ea $$
 which from follows that
\be  \ba\label{03.2}
& \partial_{i}\tau_{ij}v_{j}^{\varepsilon}=
\partial_{i}[\tau_{ij}v_{j}^{\varepsilon}]- \tau_{ij}\f{\partial_{i}v_{j}^{\varepsilon}+ \partial_{j}v_{i}^{\varepsilon}}{2}\\
&\partial_{i}\tau_{ij}^{\varepsilon}v_{j}=
\partial_{i}[\tau_{ij}^{\varepsilon}v_{j}]- \tau_{ij}^{\varepsilon}\f{\partial_{i}v_{j} + \partial_{j}v_{i}}{2}.
\ea\ee
Moreover, it is clear that
\be\ba\label{03.3}
&\partial_{i}(v_{i}v_{j})v_{j}^{\varepsilon}
  +\partial_{i}(v_{i}v_{j})^{\varepsilon}v_{j}=\partial_{i}(v_{i}v_{j}v_{j}^{\varepsilon})
  +\partial_{i}(v_{i}v_{j})^{\varepsilon}v_{j}
  -(v_{i}v_{j})\partial_{i}v_{j}^{\varepsilon},
 \\
&\partial_{k}(v_{k}\tau_{ij})\tau_{ij}^{\varepsilon}+ \partial_{k}(v_{k}\tau_{ij})^{\varepsilon}\tau_{ij}
=\partial_{k}(v_{k}\tau_{ij}\tau_{ij}^{\varepsilon})+ \partial_{k}(v_{k}\tau_{ij})^{\varepsilon}\tau_{ij}
 -(v_{k}\tau_{ij})\partial_{k}\tau_{ij}^{\varepsilon}.
 \ea\ee
Inserting \eqref{03.2} and \eqref{03.3} into \eqref{03.1}, we see that
\be\label{3.4}\ba
&\f{\partial_{t}(v_{j}v_{j}^{\varepsilon})+\partial_{t}(\tau_{ij}\tau_{ij}^{\varepsilon})}{2}
+\f{\partial_{i}(v_{i}v_{j}v_{j}^{\varepsilon})+\partial_{k}(v_{k}\tau_{ij}\tau_{ij}^{\varepsilon})}{2}
+\f{\partial_{j}(\pi v_{j}^{\varepsilon}+\pi ^{\varepsilon}  v_{j})}{2}\\&-\partial_{i}[\tau_{ij}v_{j}^{\varepsilon}
+\tau_{ij}^{\varepsilon}v_{j}]+\tau_{ij}\tau_{ij}^{\varepsilon}\\=&
 -\f{1}{2} \B[\partial_{i}(v_{i}v_{j})^{\varepsilon}v_{j}-(v_{i}v_{j})\partial_{i}v_{j}^{\varepsilon}\B]
 -\f{1}{2} \B[ \partial_{k}(v_{k}\tau_{ij})^{\varepsilon}\tau_{ij}
 -(v_{k}\tau_{ij})\partial_{k}\tau_{ij}^{\varepsilon}\B]
 \ea\ee
With a similar technique as the one
used for \eqref{key1}, we obtain
\be\ba\label{3.5}\int_{\mathbb{T}^{3}}\nabla\varphi_{\varepsilon}(\ell)\cdot\delta v(\ell)|\delta  v(\ell)|^{2}d\ell
 =& \partial_{i}\B(v_{i}( v_{j}^{2} )^{\varepsilon}-(v_{i} v_{j}^{2} )^{\varepsilon}\B)+2\partial_{i}(v_{i} v_{j}  )^{\varepsilon} v_{j}   -2v_{i}\partial_{i}  v_{j} ^{\varepsilon} v_{j},\ea\ee
and
 \be\ba\label{3.6}
\int_{\mathbb{T}^{3}}\nabla\varphi_{\varepsilon}(\ell)\cdot\delta v(\ell)|\delta  \tau_{ij}(\ell)|^{2}d\ell
=\partial_{k}\B(v_{k}( \tau_{ij}^{2} )^{\varepsilon}-(v_{i} \tau_{ij}^{2} )^{\varepsilon}\B)+2\partial_{k}(v_{k} \tau_{ij}  )^{\varepsilon} \theta-2v_{k}\partial_{k}  \tau_{ij}^{\varepsilon}\tau_{ij}.
\ea\ee
 Roughly speaking, $v,\tau \in   L^{3}(0,T;L^{3}(\mathbb{T}^{3}))$ and $\pi\in L^{\f{3}{2}}(0,T;L^{\f{3}{2}}(\mathbb{T}^{3}))$ guarantee the the limit of  $D_{\varepsilon}(v,\tau)$ is the same as that of left hand side of \eqref{3.4} as $\varepsilon\rightarrow0.$ According to the derivation of \eqref{key2}, \eqref{key3} and the definition $S(v,v,v), S(v,\tau,\tau)$, we conclude the proof of this theorem.
 \end{proof}
\subsection{ $4/3$ law for subgrid scale $\alpha$-models of turbulence
  }
  \begin{proof}[Proof of Theorem \ref{the1.7}]
  According to \cite[P4, Theorem 2.4]{[BT]}, we remark that
  $$D_{1}(u,v)=\lim\limits_{\varepsilon\rightarrow0}D_{\varepsilon}(u,v)=\lim\limits_{\varepsilon\rightarrow0}-\f14\int_{\mathbb{T}^{3}}\nabla\varphi_{\varepsilon}(\ell)\cdot\delta u(\ell)|\delta  v(\ell)|^{2}d\ell$$
Following the same path of   \eqref{key2} \eqref{key3} and the definition of $S(u,v,v)$, we get
 $$ \ba D(u,v)=&\lim_{\varepsilon\rightarrow0}D_{\varepsilon}(u,v)\\
 =&-\pi\int_{0}^{\infty}r^{3}\varphi'( r)dr\lim_{\varepsilon\rightarrow0}\int_{\partial B }\f{\zeta \cdot[u(x+ \zeta r\varepsilon)-u(x)]
[v(x+\zeta r\varepsilon)-v(x)]^{2}\f{d\sigma(\zeta)}{4\pi}}{r\varepsilon }\\
=&-\f{3}{4} S(u,v,v).\ea$$
This finishes the proof of the first part of this theorem. Likewise,  the rest proofs of this theorem are a combination of the corresponding results in \cite{[BT]} and the definition of $S_{i}(\cdot,\cdot,\cdot)$ given here. We omit the details here.
  \end{proof}
\begin{proof}[Proof of Theorem \ref{the1.8}]
From  $v\in L^{\infty}(0,T;L^{2}(\mathbb{T}^{3}))$, we observe that $u\in L^{\infty}(0,T;H^{2}(\mathbb{T}^{3}))$. Hence, we get  $u\in L^{3}(0,T;L^{3}(\mathbb{T}^{3}))$.
 Arguing as above, we deduce from \eqref{LMHD} that
\be\ba\label{3.7}
&(v_{j}^{\varepsilon}H_{j})_{t} +(v_{j}H_{j}^{\varepsilon})_{t}
+\partial_{i}(u_{i}  v_{j})^{\varepsilon}H_{j}+\partial_{i}(v_{i}  v_{j})H_{j}^{\varepsilon}
-\partial_{i}(H_{i}  H_{j})^{\varepsilon}H_{j}-\partial_{i}(H_{i}  H_{j})H_{j}^{\varepsilon}
\\&+ \partial_{j}\Pi ^{\varepsilon}H_{j}+ \partial_{j}\Pi H_{j}^{\varepsilon}
+\partial_{i}(u_{i}  H_{j})^{\varepsilon}v_{j}+\partial_{i}(u_{i}  H_{j})v_{j}^{\varepsilon}
-\partial_{i}(v_{j}  H_{i})^{\varepsilon}v_{j} -\partial_{i}(v_{j}  H_{i})v_{j}^{\varepsilon}=0.
\ea\ee
It is easy to check that
\be\ba\label{3.8}
&\partial_{i}(u_{i}  v_{j})^{\varepsilon}b_{j}+\partial_{i}(u_{i}  v_{j})H_{j}^{\varepsilon}
=\partial_{i}(u_{i}  v_{j}H_{j}^{\varepsilon})+\partial_{i}(u_{i}  v_{j})^{\varepsilon}H_{j}-
 u_{i}  v_{j}\partial_{i}H_{j}^{\varepsilon},
\\
&-\B(\partial_{i}(H_{i}  H_{j})^{\varepsilon}b_{j}+\partial_{i}(b_{i}  H_{j})H_{j}^{\varepsilon}\B)=-\B(
\partial_{i}(H_{i}  H_{j}H_{j}^{\varepsilon})+\partial_{i}(H_{i}  H_{j})^{\varepsilon}H_{j}-(H_{i}  H_{j})\partial_{i}H_{j}^{\varepsilon}\B),
\\
&\partial_{i}(u_{i}  H_{j})^{\varepsilon}v_{j}+\partial_{i}(u_{i}  H_{j})v_{j}^{\varepsilon}=
\partial_{i}(u_{i}  H_{j}v_{j}^{\varepsilon})+\partial_{i}(u_{i}  H_{j})^{\varepsilon}v_{j}
-(u_{i}  H_{j})\partial_{i}v_{j}^{\varepsilon},
\\
&-\B(\partial_{i}(v_{j}  H_{i})^{\varepsilon}v_{j} +\partial_{i}(v_{j}  H_{i})v_{j}^{\varepsilon}\B)
=-\B(\partial_{i}(v_{j}  H_{i}v_{j}^{\varepsilon})+\partial_{i}(v_{j}  H_{i})^{\varepsilon}v_{j} -
(v_{j}  H_{i})\partial_{i}v_{j}^{\varepsilon}\B).
\ea\ee
Inserting \eqref{3.7} into \eqref{3.8}, we gather that
\be\label{3.9}\ba
&\f{(v_{j}^{\varepsilon}H_{j})_{t} +(v_{j}H_{j}^{\varepsilon})_{t}}{2}
+\f{\partial_{i}(u_{i}  v_{j}H_{j}^{\varepsilon})+\partial_{i}(u_{i}  H_{j}v_{j}^{\varepsilon})-\partial_{i}(H_{i}  H_{j}H_{j}^{\varepsilon})-\partial_{i}(v_{j}  H_{i}^{\varepsilon}v_{j})}{2} +\f{\partial_{j}\Pi ^{\varepsilon}H_{j}+ \partial_{j}\Pi H_{j}^{\varepsilon}}{2}\\=&-\f{1}{2}\B(\partial_{i}(u_{i}  v_{j})^{\varepsilon}H_{j}-(u_{i}  v_{j})\partial_{i}H_{j}^{\varepsilon}\B)-\f12\B(\partial_{i}(u_{i}  H_{j})^{\varepsilon}v_{j}
-(u_{i}  H_{j})\partial_{i}v_{j}^{\varepsilon}\B)\\&
+\f12\B(\partial_{i}(H_{i}  H_{j})^{\varepsilon}H_{j}-(H_{i}  H_{j})\partial_{i}H_{j}^{\varepsilon}\B)
+\f12\B(\partial_{i}(v_{j}  H_{i})^{\varepsilon}v_{j} -
(v_{j}  H_{i})\partial_{i}v_{j}^{\varepsilon}\B).
\ea\ee
The pressure equation in $\eqref{LMHD}$ reads
$$
-\partial_{i}\partial_{i}\Pi=\partial_{i}\partial_{j}(u_{i}v_{j})
-\partial_{i}\partial_{j}(H_{i}H_{j}).
$$
Therefore,
$u,v, H\in L^{3}(0,T;L^{3}(\mathbb{T}^{3}))$ and  the
classical
elliptic estimate leads to that    $\Pi\in L^{\f32}(0,T;L^{\f32}(\mathbb{T}^{3}))$.

Meanwhile, exactly as the derivation of \eqref{key1}, we notice that
\be\label{3.10}\ba
&\int_{\mathbb{T}^{3}}\nabla\varphi_{\varepsilon}(\ell)\cdot\delta v(\ell)|\delta  H(\ell)\delta  v(\ell)| d\ell\\
=&-\partial_{i}(u_{i}H_{j}v_{j})^{\varepsilon}+\partial_{i}(u_{i}H_{j})^{\varepsilon}v_{j}
+\partial_{i}(u_{i}v_{j})^{\varepsilon}b_{j}-\partial_{i}u_{i}^{\varepsilon}v_{j}b_{j}
+v_{i}\partial_{i}(H_{j}v_{j})^{\varepsilon}-u_{i}v_{j}\partial_{i}H_{j}^{\varepsilon}
-u_{i}H_{j}\partial_{i}v_{j}^{\varepsilon}\\
=&-\partial_{i}(u_{i}H_{j}v_{j})^{\varepsilon}+\partial_{i}(u_{i}H_{j})^{\varepsilon}v_{j}
+\partial_{i}(u_{i}v_{j})^{\varepsilon}H_{j}
+u_{i}\partial_{i}(H_{j}v_{j})^{\varepsilon}-u_{i}v_{j}\partial_{i}H_{j}^{\varepsilon}
-u_{i}H_{j}\partial_{i}v_{j}^{\varepsilon} \\
=&\partial_{i}\B(u_{i}(H_{j}v_{j})^{\varepsilon}-(u_{i}H_{j}v_{j})^{\varepsilon}\B)
+\partial_{i}(u_{i}v_{j})^{\varepsilon}b_{j}
 -u_{i}v_{j}\partial_{i}H_{j}^{\varepsilon}
 +\partial_{i}(u_{i}H_{j})^{\varepsilon}v_{j}
-u_{i}H_{j}\partial_{i}v_{j}^{\varepsilon},\ea\ee
where the divergence-free conditions $\Div v=0$ and $\Div H=0$ are used.
Following a similar argument, we also find
\be\label{3.11}\ba
\int_{\mathbb{T}^{3}}\nabla\varphi_{\varepsilon}(\ell)\cdot\delta H(\ell)|\delta  H(\ell)|^{2}d\ell
=& \partial_{i}\B(H_{i}(  H_{j}^{2} )^{\varepsilon}-(H_{i} H_{j}^{2} )^{\varepsilon}\B)+2\partial_{i}(H_{i} H_{j}  )^{\varepsilon} H_{j}   -2H_{i}\partial_{i}  H_{j} ^{\varepsilon}H_{j},\ea\ee
and
\be\label{3.12}\ba
\int_{\mathbb{T}^{3}}\nabla\varphi_{\varepsilon}(\ell)\cdot\delta H(\ell)|\delta  v(\ell)|^{2}d\ell
 =& \partial_{i}\B(H_{i}( v_{j}^{2} )^{\varepsilon}-(H_{i} v_{j}^{2} )^{\varepsilon}\B)+2\partial_{i}(H_{i} v_{j}  )^{\varepsilon} v_{j}   -2H_{i}\partial_{i}  v_{j} ^{\varepsilon} v_{j}.
 \ea \ee
 With \eqref{3.9}-\eqref{3.12} in hand, repeating the previous argument  enables us to complete the proof.
\end{proof}
\section{Conclusion}
The Kolmogorov's law and  Yaglom's law are two of the few rigorous results in the theory of turbulence (see \cite{[AOAZ],[Biskamp],[Frisch],[AB],[Eyink2]}).
Here, we conclude the diverse versions of Yaglom's law for various turbulence models, such as the temperature equation, the inviscid MHD equations and the ideal incompressible
Euler equations, in the sense of Duchon-Robert \cite{[DR]} and Eyink \cite{[Eyink1]}. To this end, we deduce the corresponding dissipation terms, which resulted from the lack of smoothness of the solutions in  conservation relations of the energy, cross-helicity and helicity.

 Our conclusions, Theorem \ref{the1.1}-\ref{the1.4}, are in good agreement with the known results derived from K\'arm\'an-Howarth type equations. However, it seems that there is a little gap between Theorem \ref{the1.5}  and $4/3$ law \eqref{1.20} at present.
It is helpful to discover the new Yaglom's law in  turbulence. Indeed, based on this, seven new four-three laws
are present here. To the knowledge   of the authors, these laws  are not mentioned and existed in  previous works and it seems that Theorem \ref{the1.6}-\ref{the1.8}  are completely new. It should be pointed out that Theorem \ref{the1.7} is close to recent work \cite{[BT]} and Theorem \ref{the1.6} and \ref{the1.8} are  self-contained.

It is worth pointing out that the $8/15$ law was also obtained by Eyink in \cite{[Eyink1]}. Likewise, there exist $4/5$ law and  $8/15$ law in the MHD equations (see \cite{[PP1],[YRS],[PP2],[AB]}). Moreover, $2/15$ law for the helicity in the Euler equations has been deduced in \cite{[LPP],[KTM],[Chkhetiani],[Chkhetiani1],[Kurien]}. It is still an interesting question how to derive these laws in the sense of Duchon-Robert \cite{[DR]}.
\section*{Acknowledgements}

 Wang was partially supported by  the National Natural
 Science Foundation of China under grant (No. 11971446, No. 12071113   and  No.  11601492) and sponsored
by Natural Science Foundation of Henan.
 Wei was partially supported by the National Natural Science Foundation of China under grant (No. 11601423, No. 12271433).  Ye was partially supported by the National Natural Science Foundation of China  under grant (No.11701145) and sponsored
by Natural Science Foundation of Henan.

\end{document}